\documentclass{article}
\usepackage{graphicx} 
\usepackage{algorithm,algorithmic}
\usepackage{amsmath}
\usepackage{amsthm}
\usepackage{amssymb}
\usepackage{color}
\usepackage{bm}
\usepackage{float}
\usepackage{hyperref}
\newtheorem{lemma}{Lemma}[section]
\newtheorem{theorem}{Theorem}[section]
\newtheorem{remark}{Remark}[section]
\newtheorem{example}{Example}[section]

\usepackage[textsize=tiny]{todonotes}

\title{Properties of Nonlinear GMRES Applied to the Preconditioned Richardson Iteration\thanks{Department of Mathematics, University of Houston, 3551 Cullen Blvd, Room 641, Houston, Texas 77204-3008, USA (\tt{yhe43@central.uh.edu}).}}
\author{Yunhui He}

\begin{document}

\maketitle

\begin{abstract}
In this work, we propose new variants of Anderson acceleration and nonlinear GMRES for general fixed-point iterations, based on modified least‑squares problems associated with the methods. To solve the underlying linear systems, we apply these new approaches to accelerate the preconditioned Richardson iteration. We establish connections between the proposed variants and both left- and right-preconditioned GMRES. In particular, we show that full NGMRES applied to the preconditioned Richardson iteration is equivalent to right-preconditioned GMRES, while full NGMRES equipped with the new least-squares formulation is equivalent to left-preconditioned GMRES. Furthermore, under certain conditions on the preconditioned coefficient matrix, an equivalence between windowed NGMRES with any depth and preconditioned GMRES. These theoretical results deepen our understanding of NGMRES for solving linear systems and clarify its relationship to classical preconditioned GMRES. Finally, we establish conditions for monotonicity of the various variants. Numerical results are presented to validate our theoretical findings.
\end{abstract}

\vskip 0.3cm {\bf Keywords.} Anderson acceleration, nonlinear GMRES, preconditioned GMRES, preconditioned Richardson iteration, least-squares problem

\section{Introduction}

Anderson acceleration (AA) \cite{anderson1965iterative} and nonlinear GMRES (NGMRES) \cite{washio1997krylov,oosterlee2000krylov} are widely used to accelerate the performance of fixed-point iterations due to its high efficiency. The main idea of both methods is to use a linear combination of the previous iterates to construct a new approximation, where the combination coefficients are obtained via solving a small dimension of least-squares problem in each step. This makes AA and NGMRES be nonstationary iterations and leads to more efficient solvers.  Substantial effort has been devoted to establishing their convergence theory  \cite{walker2011anderson,he2025convergenceNG,toth2015convergence,ouyang2024descent,GreifHe25NGMRES,evans2020proof,sterck2021asymptotic,rebholz2023effect}, developing new variants \cite{he2025generalizedHeLeveque,feng2024convergence,he2025generalizedN,tang2024anderson,chen2022composite}, and extending their applications \cite{sterck2012nonlinear,sterck2013steepest,wang2021asymptotic,lipnikov2013anderson,bian2022anderson,pollock2019anderson}. In this work, we are interested in designing new variants of AA and NGMRES, and making connections with existing algorithms.
 
Consider solving 
\begin{equation*}
    g(x)=0,
\end{equation*}
with a fixed-point iteration $q(x)$.  

When $g(x)=0$ is a linear system, we assume that $g(x)=Ax-b$, that is,
\begin{equation}\label{eq:Axb}
    Ax=b,
\end{equation}
where $A\in\mathbb{R}^{n \times n}$ is invertible and $x ,b \in\mathbb{R}^{n}$. We consider the fixed-point iteration $q(x)$ given by the preconditioned Richardson iteration
\begin{equation}\label{eq:PR-FP}
    q(x)=x+P(b-Ax),
\end{equation}
where $P$ is called a preconditioner and is invertible. We adopt this formulation using $P$ (instead of the conventional $P^{-1}$ in \eqref{eq:PR-FP}) purely for notational simplicity.

Define
\begin{equation*}  
    r(x)=x-q(x)=P(Ax-b),
\end{equation*}
which we refer to {\bf preconditioned residual}. We define the {\bf classical residual} as $\bar{r}=Ax-b$ of solving $Ax=b$.  Note that when $P=I$, the preconditioned residual and the classical residual are the same.  Denote the iteration matrix of \eqref{eq:PR-FP} as $B=I-PA$. In the literature, when $P=I$, the properties and convergence analysis of AA and NGMRES are well-established.  Although applications of AA to the preconditioned Richardson method have been reported in the literature (see \cite{chen2023short}), to the best of our knowledge, the theoretical results for the case  $P\neq I$ in AA remain absent and no corresponding results have been established for NGMRES.  

In this work, we first investigate the properties of AA and NGMRES when applied to the preconditioned Richardson iteration for solving linear systems. Then, based on the derivation of NGMRES, we introduce new variants of AA and NGMRES by replacing the original least‑squares problem with a newly formulated one and study their properties. We further make connections with preconditioned GMRES algorithm. Our contributions are listed below.
\begin{itemize}
    \item We propose a new variant of NGMRES that minimizes the preconditioned residual when applied to the preconditioned Richardson iteration.
    \item We establish that applying full NGMRES to the preconditioned Richardson iteration yields an algorithm equivalent to right-preconditioned GMRES, whereas the newly developed full NGMRES formulation based on reformulated least-squares problems corresponds to left-preconditioned GMRES.
    
    \item Under suitable conditions on the preconditioned coefficient matrix, we demonstrate that windowed NGMRES($m$) with any depth $m$ is equivalent to preconditioned GMRES.
    \item We introduce two AA variants that employ new least-squares formulations. When applied to the preconditioned Richardson iteration, the two variants minimize the residual and the preconditioned residual, respectively.
\end{itemize}

Our theoretical results provide a deeper and more unified perspective on the NGMRES method for solving linear systems, clarifying its relationship to both left- and right-preconditioned GMRES. These findings not only strengthen the theoretical foundation of NGMRES but also yield alternative formulations of preconditioned GMRES that broaden the methodological landscape for iterative linear solvers. We also explore properties of new variants of AA. Such insights have the potential to inspire new algorithmic developments, inform the design of more effective preconditioners, and improve our overall understanding of accelerated fixed-point and Krylov-based methods. 

Although our theoretical results are developed in the context of linear problems, the new variants are formulated for use in general fixed-point iterations. We leave the study of the applications of these methods to nonlinear problems for future work.

The remainder of this work is organized as follows. In section \ref{sec:AA-NGMRES-P}, we derive some properties of AA and NGMRES applied to preconditioned Richardson iteration. In section \ref{sec:new}, we propose several variants of AA and NGMRES based on modified least-squares problems, and establish their relationships with preconditioned GMRES. In section \ref{sec:num}, we present numerical results that illustrate the effectiveness of these new approaches in solving linear problems. We draw conclusions in section \ref{sec:con}.
 
\section{AA and NGMRES on preconditioned Richardson iteration}\label{sec:AA-NGMRES-P}
Before presenting our new variants of AA and NGMRES, we begin with a brief review of the existing results for these two methods when applied to unpreconditioned Richardson iteration, then we extend some of the results to preconditioned cases.

For simplicity, we denote $\|\cdot \|$ as the 2‑norm for matrices and vectors. For sequences generated by AA, NGMRES or GMRES, we will, when necessary, append the superscript, $A, NG, G$ to distinguish them. For example, $x_k^A$ denotes the $k$th iterate generated by AA. If AA and NGMRES use all previous iterates information, i.e., $m_k=\min\{m,k\}=k$ for all $k$ in Algorithms \ref{alg:AA} and \ref{alg:NGMRES}, we refer to the corresponding methods as full AA and full NGMRES, respectively. Otherwise, we refer to them as windowed AA and windowed NGMRES.

\subsection{AA on preconditioned Richardson iteration}
We present the AA framework in Algorithm \ref{alg:AA}. The main idea of AA is that the update is a linear combination of several previous fixed-point iterates; see \eqref{eq:xkp1-AA}. To the best of our knowledge, no explanation has been provided for why the combination coefficients $\{\alpha_i^{(k)}\}$ in  \eqref{eq:xkp1-AA} are determined by \eqref{eq:min-AA}. For simplicity, let us denote $\bm{\alpha}^{(k)}=\big(\alpha_1^{(k)},\cdots, \alpha_{m_k}^{(k)}\big)$. Note that in Algorithm \ref{alg:AA}, if $m=0$, AA($m$) reduces to the underlying fixed-point iteration, $q(x)$. The choice of $m$ is often problem-dependent. However, it is usually relatively small, generally on the order of a few to several tens.
	\begin{algorithm}[H] 
		\caption{AA($m$):   Anderson acceleration with depth $m$} \label{alg:AA}
		\begin{algorithmic}[1] 
			\STATE  \textbf{input:} $x_0$  and $m\geq0$ with $m\in\mathbb{N}$
			\FOR {$k=0,1,\cdots$ until convergence }
			\STATE compute 
			\begin{equation}\label{eq:xkp1-AA} 
				x_{k+1} = q(x_k) + \sum_{i=1}^{m_k}\alpha_i^{(k)} \left(q(x_k)- q(x_{k-i}) \right),
			\end{equation}
			where $m_k=\min\{k,m\}$ and $\bm{\alpha}^{(k)}=\big(\alpha_1^{(k)},\cdots, \alpha_{m_k}^{(k)}\big)$ is obtained by solving the  least-squares problem
			\begin{equation}\label{eq:min-AA} 
				\min_{\bm{\alpha}^{(k)}} \left\|r(x_k)+\sum_{i=1}^{m_k} \alpha_i^{(k)} \left(r(x_k)-r(x_{k-i}) \right) \right\|^2,
			\end{equation}
            where $r_k=x_k-q(x_k)$.
			\ENDFOR
        \STATE  \textbf{output:} $x_{k+1}$ 
		\end{algorithmic}
	\end{algorithm}

In \cite{walker2011anderson}, it was shown that when full AA is applied to the unpreconditioned Richardson iteration $q(x_k)=x_k+(b-Ax_k)$, and both full AA and GMRES use the same initial guess to solve $Ax=b$, with the 2-norm of the residuals of GMRES strictly decreasing, then $x_{k+1}=q(x_k^G)=x_k^G-r_k^G$, where $x_{k+1}$ is the iterate generated by full NGMRES, $x_k^G$ is the iterate generated by GMRES, and $r_k^G=Ax_k^G-b$. In this situation, it is easily obtained $r_{k+1}=Br_k^G$, where $B=I-A$ is the iteration matrix of $q(x)$.  In \cite{GreifHe25NGMRES}, we have shown that when applying AA to the unpreconditioned Richardson iteration and $B$ is invertible, the least-squares problem defined by \eqref{eq:min-AA} can be rewritten as 
\begin{equation}\label{eq:AA-LSP-inverseG}
 \min_{{\bm \alpha}^{(k)}} \|B^{-1}r_{k+1}\|^2,
\end{equation}
where $r_{k+1}=x_{k+1}-q(x_{k+1})=Ax_{k+1}-b$. In general, there is no guaranty that the sequence $\{\|r_{k+1}\|\}$ of AA is decreasing.
However, for the full AA, if the 2-norm of the residuals of GMRES is strictly decreasing,  we have
\begin{equation*}
    \|B^{-1}r_{k+1}\|=\|r_k^G\|< \|r_{k-1}^G\|=\|B^{-1}r_{k}\|,
\end{equation*}
which means that the sequence $\{B^{-1}r_{k+1}\}$ is strictly decreasing.

In the following, we will show that \eqref{eq:AA-LSP-inverseG}  holds for the preconditioned Richardson iteration, \eqref{eq:PR-FP}, i.e, $B=I-PA$ and $r_{k+1}=P(Ax_{k+1}-b)$. We present properties of AA applied to the preconditioned Richardson iteration below.
\begin{theorem}\label{thm:AA-org-P}
 Consider AA($m$) presented in Algorithm \ref{alg:AA} to accelerate preconditioned Richardson iteration, \eqref{eq:PR-FP}. Let $B=I-PA$ and $r_{k+1}=x_{k+1}-q(x_{k+1})$. Assume that $B$ is invertible. Then, the least-squares problem defined by \eqref{eq:min-AA} can be rewritten as 
\begin{equation}\label{eq:AA-LSP-inverseG-P}
 \min_{{\bm \alpha}^{(k)}} \|r_k-R_k{\bm \alpha}^{(k)}\|^2=\min_{{\bm \alpha}^{(k)}} \|B^{-1}r_{k+1}\|^2,
\end{equation}
where 
\begin{equation}\label{eq:defRk}
R_k=[r_{k-1}-r_k, r_{k-2}-r_k, \cdots, r_{k-m_k}-r_k].
\end{equation}
In addition, for $k\geq 0$
\begin{equation*}
    \|B^{-1}r_{k+1}\|\leq\|r_k\|,
\end{equation*}
and for $i, j =0, 1, \cdots, m_k$, we have the following orthogonal property:
\begin{equation*}
 (B^{-1}r_{k+1})^T(r_{k-i}-r_{k-j})=0.
\end{equation*}
Furthermore, if $\|B\|<1$, we have
\begin{equation*}
    \|r_{k+1}\|<\|r_k\|.
\end{equation*}
\end{theorem}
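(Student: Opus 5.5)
The whole argument rests on the affine structure of the preconditioned Richardson map. With $q(x)=x+P(b-Ax)$ we have $r(x)=P(Ax-b)$ and $q(x)=Bx+Pb$. The plan is first to express $r_{k+1}$ in terms of the combination coefficients. By \eqref{eq:xkp1-AA} and the definition \eqref{eq:defRk}, $x_{k+1}=q(x_k)-\sum_i\alpha_i^{(k)}(q(x_{k-i})-q(x_k))$. Since $r$ is affine, $r(x_{k+1})=PAx_{k+1}-Pb$. Applying $PA$ to $q(x_j)=Bx_j+Pb$ and using $PAB=BPA$ (both are polynomials in $PA$), we get $PAq(x_j)-Pb=B(PAx_j-Pb)+PAPb-Pb+BPb$. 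The constant terms cancel: $PAPb+BPb=(PA+B)Pb=Pb$. Hence $r(q(x_j))=Br_j$. Because the coefficients in \eqref{eq:xkp1-AA} sum to one (affine combination), it follows that
\[
r_{k+1}=B\Big(r_k-\sum_{i=1}^{m_k}\alpha_i^{(k)}(r_{k-i}-r_k)\Big)=B\,(r_k-R_k\bm\alpha^{(k)}).
\]
The sign convention matches \eqref{eq:min-AA}, since $r_k+\sum\alpha_i(r_k-r_{k-i})=r_k-R_k\bm\alpha$. Invertibility of $B$ then gives $B^{-1}r_{k+1}=r_k-R_k\bm\alpha^{(k)}$, so the two minimization problems in \eqref{eq:AA-LSP-inverseG-P} are identical as functions of $\bm\alpha^{(k)}$.

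The remaining claims are standard least-squares facts applied to this identity. Taking $\bm\alpha^{(k)}=0$ is admissible, and the minimizer does no worse, so $\|B^{-1}r_{k+1}\|\le\|r_k\|$. For orthogonality, the normal equations state that the optimal residual $r_k-R_k\bm\alpha^{(k)}=B^{-1}r_{k+1}$ is orthogonal to every column $r_{k-i}-r_k$ of $R_k$. The general case $r_{k-i}-r_{k-j}=(r_{k-i}-r_k)-(r_{k-j}-r_k)$ follows by linearity, and the cases with $i$ or $j$ equal to $0$ are included trivially.

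For the final strict inequality, I would write $\|r_{k+1}\|=\|B\,B^{-1}r_{k+1}\|\le\|B\|\,\|B^{-1}r_{k+1}\|\le\|B\|\,\|r_k\|$. When $\|B\|<1$ this is strictly less than $\|r_k\|$ whenever $r_k\ne0$. If $r_k=0$, then $x_k$ is already the solution, the iteration has terminated, and equality $0=0$ holds, so I would state this caveat.

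The only delicate step is the identity $r(q(x))=B\,r(x)$ together with the affine-combination bookkeeping, i.e.\ checking that the constant terms cancel and that the sign convention of $R_k$ matches \eqref{eq:min-AA}. Everything after that is routine.
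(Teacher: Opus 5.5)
Your proposal is correct and follows essentially the same route as the paper: both derive $B^{-1}r_{k+1}=r_k-R_k\bm{\alpha}^{(k)}$ from the affine structure $q(x)=Bx+Pb$ and then obtain the remaining claims from the normal equations. The paper computes $x_{k+1}$ directly, whereas you package this step as $r(q(x))=Br(x)$ plus an affine-combination argument, and you bound $\|B^{-1}r_{k+1}\|\le\|r_k\|$ by comparing with $\bm{\alpha}^{(k)}=0$ instead of the paper's Cauchy--Schwarz step. Your caveat that the strict inequality needs $r_k\neq 0$ is a valid refinement the paper omits, since $r_k=0$ forces $r_{k+1}=0$ and then $\|r_{k+1}\|<\|r_k\|$ fails.
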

\begin{proof} 
In AA($m$), from \eqref{eq:min-AA} we have
\begin{equation}\label{eq:AA-lsp-Pre}
 r(x_k)+\sum_{i=1}^{m_k} \alpha_i^{(k)} \left(r(x_k)- r(x_{k-i}) \right)  =r_k -\sum_{i=1}^{m_k} \alpha_i^{(k)} (r_{k-i}-r_k)
    = r_k- R_k\bm{\alpha}^{(k)}.
\end{equation}
In  AA($m$), from \eqref{eq:xkp1-AA} we have
\begin{align*}
    x_{k+1} =& q(x_k) + \sum_{i=1}^{m_k}\alpha_i^{(k)} \left(q(x_k)- q(x_{k-i}) \right) \\
    =&x_k+P(b-Ax_k) + \sum_{i=1}^{m_k}\alpha_i^{(k)} \left((x_k-x_{k-i}) -PA(x_k-x_{k-i})  \right)\\
    =& (I-PA)x_k + Pb+ \sum_{i=1}^{m_k}\alpha_i^{(k)} (I-PA)(x_k-x_{k-i})  \\
    =& (I-PA) \left(x_k + \sum_{i=1}^{m_k}\alpha_i^{(k)}(x_k-x_{k-i}) \right)+Pb.
\end{align*}
It follows that 
\begin{align}
     r(x_{k+1})&=x_{k+1}-q(x_{k+1})\nonumber\\
     &= P(Ax_{k+1} -b)\nonumber\\
     &=  PA\left(  (I-PA) \left(x_k + \sum_{i=0}^{m_k}\alpha_i^{(k)}(x_k-x_{k-i}) \right)+Pb \right)-Pb \nonumber\\
     & =(I-PA) \left((PAx_k-Pb) +\sum_{i=0}^{m_k} \alpha_i^{(k)} PA(x_k- x_{k-i}) \right)\nonumber\\
      & =(I-PA) \left(r_k -R_k\bm{\alpha}^{(k)}\right)\nonumber\\
      &=B\left(r_k -R_k\bm{\alpha}^{(k)}\right)\label{eq:AA-rk1-alt}
\end{align}
Thanks to \eqref{eq:AA-lsp-Pre} and \eqref{eq:AA-rk1-alt}, we obtain $B^{-1}r_{k+1}=r_k-R_k\bm{\alpha}^{(k)}$, which gives \eqref{eq:AA-LSP-inverseG-P}.  
From the property of the least-squares problem, we have
\begin{equation}
    (B^{-1}r_{k+1})^T R_k=0.
\end{equation}
According to the definition of $R_k$, we rewrite the above orthogonal property as $(B^{-1}r_{k+1})^T(r_{k-s}-r_k)=0$ for $s=1,2,\cdots, m_k$. It follows that  $(B^{-1}r_{k+1})^T(r_{k-i}-r_{k-j})=0$ for all $i,j =1,2,\cdots, m_k$. 

Recall that $B^{-1}r_{k+1}=r_k-R_k\bm{\alpha}^{(k)}$. It leads to $$(B^{-1}r_{k+1})^T(B^{-1}r_{k+1})=(B^{-1}r_{k+1})^T(r_k-R_k\bm{\alpha}^{(k)})=(B^{-1}r_{k+1})^Tr_k.$$ Thus, $\|B^{-1}r_{k+1}\|\leq \|r_k\|$. Using $\|B\|<1$, we have
$$\|r_{k+1}\|=\|B B^{-1}r_{k+1}\|\leq \|B\| \|B^{-1}r_{k+1}\| \leq \|B\|\|r_k\|< \|r_k\|,$$ which is the desired result.
\end{proof} 
We mention that in Theorem \ref{thm:AA-org-P} $m_k$ can be $k$, i.e., it is the full AA method.
\begin{remark}
In \cite[Corollary 2.10]{walker2011anderson}, it discusses the full AA applied to \eqref{eq:PR-FP} and GMRES  applied to $PAx=Pb$, and it shows that if we assume that the sequence $\{\|r_k^G=P(Ax_k^G-b)\|\}$ is strictly decreasing, then $x_{k+1}=q(x_k^G)=x_k^G-r_k^G$. In this situation, based on our  discussion above, we can show that the sequence $\{\|B^{-1}r_k\|\}$ of AA is strictly decreasing.
\end{remark}

\subsection{NGMRES on preconditioned Richardson iteration}
 NGMRES was originally proposed by Washio and Oosterlee \cite{washio1997krylov} to accelerate nonlinear multigrid methods. We present the NGMRES framework in Algorithm \ref{alg:NGMRES}. The main differences between AA and NGMRES are the last term in \eqref{eq:xkp1-NG}  and the least-squares problem defined by \eqref{eq:min-NG}. For convenience, let $\bm{\beta}^{(k)}=\big(\beta_0^{(k)},\beta_1^{(k)},\cdots, \beta_{m_k}^{(k)}\big)$ .
  	\begin{algorithm}[H] 
 		\caption{NGMRES($m$):  Nonlinear GMRES with depth $m$} \label{alg:NGMRES}
 		\begin{algorithmic}[1]
 			\STATE  \textbf{input:} $x_0$  and $m\geq0$ with $m\in\mathbb{N}$
 			\FOR {$k=0,1,\cdots$ until convergence}
 			\STATE compute 
 			\begin{equation}\label{eq:xkp1-NG} 
 				x_{k+1} = q(x_k) + \sum_{i=0}^{m_k}\beta_i^{(k)} \left(q(x_k)- x_{k-i} \right),
 			\end{equation}
 			where $m_k=\min\{k,m\}$ and $\bm{\beta}^{(k)}=\big(\beta_0^{(k)},\beta_1^{(k)},\cdots, \beta_{m_k}^{(k)}\big)$ is obtained by solving the  least-squares problem
 			\begin{equation}\label{eq:min-NG} 
 				\min_{\bm{\beta}^{(k)}} \left\| g(q(x_k))+\sum_{i=0}^{m_k} \beta_i^{(k)} \left(g(q(x_k))-g(x_{k-i}) \right) \right\|^2.
 			\end{equation}
 			\ENDFOR
            \STATE  \textbf{output:} $x_{k+1}$
 		\end{algorithmic}	 
 	\end{algorithm}
 We provide the derivation of NGMRES Algorithm \ref{alg:NGMRES} below. Consider solving $g(x)=0$. Given an iterative scheme $q(x)$, we aim to accelerate the sequence of iterates, where the current iterate is given by $x^q_k=q(x_k)$. The new update is based on the linear approximation of the nonlinear operator $g$ around $x_k^q$ in the space 
 \begin{equation*}
    x^q_k+ {\rm span} \{x^q_k-x_{k-m}, x^q_k-x_{k-{m+1}},\cdots, x^q_k-x_k\}.
 \end{equation*}
We approximate $g(x)$ near $x_k^q$ by
 \begin{align}
     g(x_k^q+ \sum_{i=0}^{m}\beta_i (x^q_k-x_{k-i}))&\approx g(x_k^q) +\sum_{i=0}^m\beta_i \left(\frac{\partial g}{\partial x}\right)_{x_k^q}( x^q_k-x_{k-i}) \label{eq:Taylor-exp-der}  \\
     &\approx g(x_k^q) +\sum_{i=0}^m\beta_i (g(x_k^q)- g(x_{k-i})).\label{eq:Taylor-exp}
 \end{align}
We choose the parameters $\{\beta_i\}_{i=0}^{m}$ that minimize the $2$-norm of the right-hand side in \eqref{eq:Taylor-exp}. Using $\{\beta_i\}_{i=0}^{m}$, we define the new update $x_{k+1}$ as 
\begin{equation}
    x_{k+1}= x_k^q+ \sum_{i=0}^{m}\beta_i (x_k^q-x_{k-i}).
\end{equation}
At each iteration, we update the combination coefficients $\{\beta_i\}_{i=0}^{m}$. This leads to the NGMRES method presented in Algorithm \ref{alg:NGMRES}.  In practice, the evaluation of function $g(\cdot)$ in \eqref{eq:min-NG}  is relatively inexpensive compared to the evaluation of function $q(\cdot)$. Consequently, the computational cost of NGMRES is comparable to that of the underlying fixed‑point iteration.

In \cite{GreifHe25NGMRES}, we have shown that when applying NGMRES to the unpreconditioned Richardson iteration to solve $g(x)=Ax-b=0$, the 2-norm of the classical residual of NGMRES is nonincreasing and the least-squares problem defined in \eqref{eq:min-NG} minimizes the classical $(k+1)$th residual (i.e., $\bar{r}_{k+1}=Ax_{k+1}-b$) of NGMRES. Note that the Taylor approximations in \eqref{eq:Taylor-exp-der} and \eqref{eq:Taylor-exp} become equalities because $q'(x)=A$, a constant matrix. For the preconditioned Richardson iteration, $q'(x)=PA$. It can be shown that NGMRES minimizes the classical residual for the preconditioned Richardson iteration. We state this in the following theorem.

\begin{theorem}
Consider NGMRES($m$) presented in Algorithm \ref{alg:NGMRES} to accelerate preconditioned Richardson iteration, \eqref{eq:PR-FP}. Let $H=I-AP$ and $\bar{r}_{k+1}=Ax_{k+1}-b$. Then, the least-squares problem defined by \eqref{eq:min-NG}  can be rewritten as 
\begin{equation}\label{eq:NG-LSP-P}
 \min_{{\bm \beta}^{(k)}} \|\bar{r}_{k+1}\|^2= \min_{{\bm \beta}^{(k)}} \|H\bar{r}_k-F_k{\bm \beta}^{(k)}\|^2,
\end{equation}
where
\begin{equation}\label{eq:defFk}
    F_k=[\bar{r}_k -H\bar{r}_k, \bar{r}_{k-1}-H\bar{r}_k, \cdots, \bar{r}_{k-m_k}-H\bar{r}_k].
\end{equation}
Moreover, for $i, j=0,1,2, \dots, m_k$, the classical residuals satisfy
\begin{equation*}
\bar{r}_{k+1}^TAP\bar{r}_k=0, \quad  \bar{r}_{k+1}^T (\bar{r}_{k-j} - \bar{r}_{k-i})=0,
\end{equation*}
and
\begin{equation}\label{eq:NG-decrease-P}
 \|\bar{r}_{k+1}\|\leq  \|\bar{r}_k\|.
\end{equation}
\end{theorem}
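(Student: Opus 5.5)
The plan is to mirror the proof of Theorem \ref{thm:AA-org-P}: compute the residual of the NGMRES update explicitly, observe that it is exactly the quantity inside the least-squares problem \eqref{eq:min-NG}, and then read off the orthogonality and monotonicity from the normal equations.

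\textbf{Step 1 (identify the objective).} Here $g(x)=Ax-b$, so $g(x_{k-i})=\bar r_{k-i}$. Substituting $q(x_k)=x_k+P(b-Ax_k)$ gives
$$g(q(x_k))=Ax_k-b-AP(Ax_k-b)=(I-AP)\bar r_k=H\bar r_k.$$
Hence the expression inside \eqref{eq:min-NG} is
$$H\bar r_k+\sum_{i=0}^{m_k}\beta_i^{(k)}\bigl(H\bar r_k-\bar r_{k-i}\bigr)=H\bar r_k-F_k\bm\beta^{(k)},$$
with $F_k$ as in \eqref{eq:defFk}.

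\textbf{Step 2 (identify the new residual).} The update \eqref{eq:xkp1-NG} is affine in the iterates and $g$ is affine. Applying $A\cdot-b$ to \eqref{eq:xkp1-NG} therefore gives
$$\bar r_{k+1}=g(q(x_k))+\sum_{i=0}^{m_k}\beta_i^{(k)}\bigl(g(q(x_k))-g(x_{k-i})\bigr)=H\bar r_k-F_k\bm\beta^{(k)}.$$
This is exactly the objective from Step 1, which establishes \eqref{eq:NG-LSP-P}. As noted before the theorem, the Taylor approximations \eqref{eq:Taylor-exp-der}--\eqref{eq:Taylor-exp} are exact in this linear setting.

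\textbf{Step 3 (orthogonality).} The optimality (normal equation) condition of the least-squares problem gives $\bar r_{k+1}^TF_k=0$, that is,
$$\bar r_{k+1}^T(\bar r_{k-i}-H\bar r_k)=0,\qquad i=0,\dots,m_k.$$
Subtracting the conditions for indices $i$ and $j$ yields $\bar r_{k+1}^T(\bar r_{k-j}-\bar r_{k-i})=0$. Since $I-H=AP$, the $i=0$ column of $F_k$ equals $\bar r_k-H\bar r_k=AP\bar r_k$. Its orthogonality condition is therefore $\bar r_{k+1}^TAP\bar r_k=0$.

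\textbf{Step 4 (monotonicity).} Using $\bar r_{k+1}=H\bar r_k-F_k\bm\beta^{(k)}$ together with $\bar r_{k+1}^TF_k=0$, we get
$$\|\bar r_{k+1}\|^2=\bar r_{k+1}^TH\bar r_k=\bar r_{k+1}^T\bar r_k-\bar r_{k+1}^TAP\bar r_k=\bar r_{k+1}^T\bar r_k.$$
Cauchy--Schwarz then gives $\|\bar r_{k+1}\|^2\le\|\bar r_{k+1}\|\,\|\bar r_k\|$, which yields \eqref{eq:NG-decrease-P}; the case $\bar r_{k+1}=0$ is trivial.

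The only delicate point is the bookkeeping in Step 2, namely checking that the sum runs over $i=0,\dots,m_k$ and so includes the column built from $\bar r_k$ itself. It is precisely this column that produces the identity $\bar r_{k+1}^TAP\bar r_k=0$, and the monotonicity argument in Step 4 depends on that identity.
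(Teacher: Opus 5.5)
Your proposal is correct and follows essentially the same route as the paper: you identify the least-squares objective with $\bar r_{k+1}=H\bar r_k-F_k\bm{\beta}^{(k)}$, use the normal equations $\bar r_{k+1}^TF_k=0$ (the $i=0$ column $AP\bar r_k$ gives $\bar r_{k+1}^TAP\bar r_k=0$, and differences of columns give the rest), and conclude from $\|\bar r_{k+1}\|^2=\bar r_{k+1}^T\bar r_k$. The only difference is cosmetic: you compute $g(q(x_k))=H\bar r_k$ first and then recognize $\bar r_{k+1}$, whereas the paper goes in the reverse order.
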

 \begin{proof}
In Algorithm \ref{alg:NGMRES}, using $q(x_k)=(I-PA)x_k+Pb$, we rewrite \eqref{eq:xkp1-NG} as
\begin{equation}\label{eq:xkp1-NG-expr}
    x_{k+1} = (I-PA)x_k+Pb + \sum_{i=0}^{m_k}\beta_i^{(k)} ((I-PA)x_k+Pb  -x_{k-i}).  
\end{equation}
Using $g(x)=Ax-b$, we have
\begin{align*}
    & g(q(x_k))+\sum_{i=0}^{m_k} \beta_i^{(k)} \left(g(q(x_k))-g(x_{k-i}) \right)\\
    =& A((I-PA)x_k+Pb)-b +  \sum_{i=0}^{m_k} \beta_i^{(k)} \left( A((I-PA)x_k+Pb)-b - (Ax_{k-i} -b) \right)\\
    =& A \left( (I-PA)x_k+Pb +   \sum_{i=0}^{m_k} (  (I-PA)x_k+Pb  -x_{k-i})  \right) -b\\
   = & Ax_{k+1}-b\\
   =&\bar{r}(x_{k+1}).
\end{align*}  
Thus, we obtain the left term in \eqref{eq:NG-LSP-P}.

Next, we rewrite $\bar{r}(x_{k+1})$. From the above derivation, we have
\begin{align*}
  \bar{r}(x_{k+1}) =& A \big( (I-PA)x_k+Pb +   \sum_{i=0}^{m_k} (  (I-PA)x_k+Pb  -x_{k-i})  \big) -b\\
  =& (I-AP)Ax_k-(I-AP)b +\sum_{i=0}^{m_k} \beta_i^{(k)} \left( A(I-PA)x_k+APb -Ax_{k-i}) \right)\\
  =&(I-AP)(Ax_k-b) + \sum_{i=0}^{m_k} \beta_i^{(k)}\big( (I-AP)(Ax_k-b)-(Ax_{k-i}-b)   \big)\\
  =& H\bar{r}_k + \sum_{i=0}^{m_k} \beta_i^{(k)} (H \bar{r}_k -\bar{r}_{k-i}) \\
  =& H\bar{r}_k -F_k {\bm \beta}^{(k)},
\end{align*}
which is the right term in \eqref{eq:NG-LSP-P}. Since ${\bm \beta^{(k)}}$ is the solution of the least-squares problem defined by \eqref{eq:NG-LSP-P}, we have
\begin{equation*}
 \bar{r}_{k+1}^T F_k=0,
\end{equation*}
which means $\bar{r}_{k+1}$ is orthogonal to all columns of $F_k$ defined in \eqref{eq:defFk}. Using $H=I-AP$, we have $\bar{r}_{k+1}^T AP\bar{r}_k=\bar{r}_{k+1}^T (\bar{r}_k -H\bar{r}_k)=0$.
 
For $s=0, 1, 2,\cdots, m_k$, 
\begin{equation*}
     \bar{r}_{k+1}^T (\bar{r}_{k-s}-\bar{r}_k) =\bar{r}_{k+1}^T(\bar{r}_{k-s}-\bar{r}_k+AP\bar{r}_k)=\bar{r}_{k+1}^T(\bar{r}_{k-s}-H\bar{r}_k)=0.
\end{equation*}
It follows that $\bar{r}_{k+1}^T (\bar{r}_{k-j}- \bar{r}_{k-i})=\bar{r}_{k+1}^T (\bar{r}_{k-j}-\bar{r}_k- (\bar{r}_{k-i}-\bar{r}_k))=0$. 
 
Using $\bar{r}_{k+1}^T AP\bar{r}_k=0$ and $\bar{r}_{k+1}^TF_k=0$, we have
 \begin{equation*}
     \bar{r}_{k+1}^T \bar{r}_{k+1} =\bar{r}_{k+1}^T(H\bar{r}_k-F_k{\bm \beta}^{(k)})=\bar{r}_{k+1}^T(I-AP)\bar{r}_k=\bar{r}_{k+1}^T\bar{r}_k.
 \end{equation*}
 It follows $\|\bar{r}_{k+1}\|\leq \|\bar{r}_k\|$.
 \end{proof}
The above theorem reveals that at each iteration,  NGMRES minimizes the classical residual $\bar{r}_k=Ax_k-b$ and the 2-norm of the classical residual is nonincreasing.   Recall that for right-preconditioned GMRES, it minimizes the classical residual too. This makes us wonder if these two approaches are equivalent. For $P=I$,  GMRES and  NGMRES are closely related. We summarize some key results from \cite{GreifHe25NGMRES}.  Assume that the 2-norm of the classical residuals of GMRES is strictly decreasing, then  
   \begin{itemize}
       \item  The iterates of full NGMRES  are the same as those of GMRES. 
       \item  For $A$ that are symmetric or shifted skew-symmetric of the form $A=\tau I + S$, where $\tau \in \mathbb{R}$ and $S$ is skew-symmetric,  the windowed NGMRES($m$) and GMRES are equivalent for $\forall m\in\mathbb{Z}^+$.
   \end{itemize}
We extend the above results to full NGMRES for the preconditioned case. 
 \begin{theorem}\label{thm:fullNG=G}
Consider the full NGMRES presented in Algorithm \ref{alg:NGMRES}, used to accelerate the preconditioned Richardson iteration \eqref{eq:PR-FP} for solving $g(x)=Ax-b=0$. Also consider the right-preconditioned GMRES with the right preconditioner $P$ for solving $Ax=b$.  Assume that both methods use the same initial guess $x_0$. Let $\{x_i\}$ and $\{x_i^{Gr}\}$ be the sequences generated by the full NGMRES and the right-preconditioned GMRES, respectively, and $\bar{r}^{Gr}_i=Ax^{Gr}_i-b$. Furthermore, assume for some $k^*\in \mathbb{Z}^+$, $\bar{r}^{Gr}_{k^*-1}\neq 0$  and $\|\bar{r}^{Gr}_k\|< \|\bar{r}^{Gr}_{k-1}\|$  for all $k\in\mathbb{Z}^+$ such that $0<k<k^*$. Then, the full NGMRES and the right-preconditioned GMRES are equivalent, i.e., $x_k=x_k^{Gr}$ for $0\leq k\leq k^*$.
\end{theorem}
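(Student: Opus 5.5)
We argue by induction on $k$, showing that each full NGMRES iterate solves the same minimization problem as right-preconditioned GMRES over the same affine Krylov space. Recall that right-preconditioned GMRES with preconditioner $P$ computes
\begin{equation*}
x_k^{Gr}\in x_0+\mathcal{K}_k(PA,P\bar{r}_0),\qquad \|\bar{r}^{Gr}_k\|=\min_{x\in x_0+\mathcal{K}_k(PA,P\bar{r}_0)}\|Ax-b\|.
\end{equation*}
Equivalently, $\bar r^{Gr}_k$ minimizes $\|p(AP)\bar r_0\|$ over polynomials $p$ of degree $\le k$ with $p(0)=1$. The previous theorem shows that NGMRES minimizes $\|\bar r_{k+1}\|=\|Ax_{k+1}-b\|$ over the affine set
\begin{equation*}
S_{k+1}=q(x_k)+{\rm span}\{q(x_k)-x_0,\dots,q(x_k)-x_k\}.
\end{equation*}
It therefore suffices to show that $S_{k+1}=x_0+\mathcal{K}_{k+1}(PA,P\bar r_0)$ whenever $x_i=x_i^{Gr}$ for $i\le k$ and $k+1\le k^*$. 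Uniqueness of the minimizer then follows because $A$ is invertible and the set is affine, so the minimizer of $\|Ax-b\|$ over it is unique.

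\textbf{Inclusion $S_{k+1}\subseteq x_0+\mathcal{K}_{k+1}$.} By the inductive hypothesis, $x_i\in x_0+\mathcal{K}_i\subseteq x_0+\mathcal{K}_k$ for $i\le k$. Since $q(x_k)=x_k-P\bar r_k$, and $\bar r_k=\bar r_0+A(x_k-x_0)\in \bar r_0+A\mathcal{K}_k$, we get $P\bar r_k\in\mathcal{K}_{k+1}(PA,P\bar r_0)$. Hence $q(x_k)\in x_0+\mathcal{K}_{k+1}$, and each difference $q(x_k)-x_i$ also lies in $\mathcal{K}_{k+1}$.

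\textbf{Reverse inclusion.} Here we need ${\rm span}\{q(x_k)-x_i\}_{i=0}^k$ to be all of $\mathcal{K}_{k+1}$, which reduces to showing that $\{x_1-x_0,\dots,x_k-x_0,P\bar r_k\}$ spans $\mathcal{K}_{k+1}$. The strict decrease $\|\bar r^{Gr}_i\|<\|\bar r^{Gr}_{i-1}\|$ for $i<k^*$ gives this. Standard GMRES theory shows that no stagnation means $x_i^{Gr}-x_0$ has a nonzero component in the new Krylov direction, so $x_1-x_0,\dots,x_k-x_0$ form a basis of $\mathcal{K}_k$. Moreover $\bar r_{k-1}^{Gr}\ne0$ together with the strict decrease implies $\dim\mathcal{K}_{k+1}=k+1$ for $k+1\le k^*$; otherwise GMRES would have terminated with the exact solution, or stagnated. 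Finally, $P\bar r_k\notin\mathcal{K}_k$. Otherwise $AP\bar r_k\in A\mathcal{K}_k$, and the GMRES orthogonality $\bar r_k\perp AP\mathcal K_k$ (written in the $AP$ form) would give $\bar r_k^TAP\bar r_k$-type contradictions. A cleaner route: if $P\bar r_k\in\mathcal{K}_k$, then $x_k-\varepsilon P\bar r_k$ stays in $x_0+\mathcal{K}_k$ and has residual $(I-\varepsilon AP)\bar r_k$, whose norm would be at least $\|\bar r_k\|$ by optimality. This forces $\bar r_k^TAP\bar r_k=0$. That alone is not a contradiction, so I would instead use the degree argument: $P\bar r_k=P p_k(AP)\bar r_0=p_k(PA)P\bar r_0$ with $\deg p_k=k$ exactly, and exact degree $k$ is precisely what the strict decrease at step $k$ guarantees. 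Since $p_k(0)=1$, the new term has the full Krylov degree.

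\textbf{Main obstacle.} The delicate step is this exact-degree and dimension bookkeeping, that is, translating the strict-decrease hypothesis into linear independence of $\{x_i-x_0\}_{i=1}^k\cup\{P\bar r_k\}$ and into $\dim\mathcal{K}_{k+1}=k+1$. I would handle it by writing $\bar r^{Gr}_i=p_i(AP)\bar r_0$ and arguing that $\deg p_i=i$ exactly. If $\deg p_i<i$, then $\bar r_i^{Gr}$ would be feasible at step $i-1$, contradicting $\|\bar r_i^{Gr}\|<\|\bar r_{i-1}^{Gr}\|$ via uniqueness of the minimizer. The triangular structure in the degrees then yields the basis. The base case $k=0$ is direct: $x_1=q(x_0)+\beta_0(q(x_0)-x_0)=x_0-(1+\beta_0)P\bar r_0$, which minimizes $\|\bar r\|$ over $x_0+{\rm span}\{P\bar r_0\}$, exactly as in GMRES step one.
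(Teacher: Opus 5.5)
Your proposal is correct and follows essentially the paper's route. The paper's basis $V_{k+1}$ is exactly $P^{-1}$ applied to $\{x_1-x_0,\dots,x_k-x_0,\,x_k-x_0-P\bar{r}_k\}$, so its claim that $V_{k+1}$ spans $\mathcal{K}_{k+1}(AP,\bar{r}_0)$ is the same as your identity $S_{k+1}=x_0+\mathcal{K}_{k+1}(PA,P\bar{r}_0)$. Both arguments obtain $x_k-x_0\notin\mathcal{K}_{k-1}$ and $P\bar{r}_k\notin\mathcal{K}_k$ from the strict decrease. Your use of $\bar{r}^{Gr}_{k^*-1}\neq 0$ to get $\dim\mathcal{K}_{k+1}=k+1$, which is what makes ``exact degree'' well defined, spells out a step the paper uses only tacitly. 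Drop the abandoned orthogonality attempt when you write it up.
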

This theorem indicates that the full NGMRES can be an alternative to the right-preconditioned GMRES, which is an extension of the unpreconditioned result in \cite{GreifHe25NGMRES}.  Before presenting the proof, we outline its main idea. 
Recall that for the right-preconditioned GMRES, the $k$th approximation $x_k^{Gr}$ can be expressed as (see \cite[Chapter 9]{saad2003iterative}) 
\begin{equation}\label{eq:RG-xk}
    x_k^{Gr}=x_0+Ps_{k-1}(AP)\bar{r}_0,
\end{equation}
where $\bar{r}_0=Ax_0-b$ and $s_{k-1}$ is a polynomial of degree $k-1$, which minimizes the $k$th residual norm, i.e.,
\begin{equation}\label{eq:right-pre-GMRES-mini}
    \min_{s_{k-1} \in S_{k-1}}\|\bar{r}^{Gr}_k\|=\min_{s_{k-1} \in S_{k-1}}\|\bar{r}_0+APs_{k-1}(AP)\bar{r}_0\|,
\end{equation}
where $S_{k-1}$ denotes the set of polynomials of degree at most $k-1$.
The corresponding right-preconditioned Krylov subspace is 
\begin{equation*}
    \mathcal{K}_k(AP, \bar{r}_0)={\rm span} \{\bar{r}_0, AP\bar{r}_0, \cdots, (AP)^{k-1}\bar{r}_0\}.
\end{equation*}
Then, \eqref{eq:RG-xk} can be rewritten as
\begin{equation}\label{eq:RG-xk-update}
    x_k^{Gr}=x_0+ Pz_k,
\end{equation}
where $z_k\in \mathcal{K}_k(AP, \bar{r}_0)$, and  \eqref{eq:right-pre-GMRES-mini} is
\begin{equation}\label{eq:right-GMRES-mini}
    \min_{z_k\in  \mathcal{K}_k(AP, \bar{r}_0)}\|\bar{r}^{Gr}_k\|= \min_{z_k\in  \mathcal{K}_k(AP, \bar{r}_0)}\|\bar{r}_0+APz_k\|.
\end{equation}
The central idea of the proof of Theorem \ref{thm:fullNG=G} is to show  that the Krylov subspace $\mathcal{K}_k(AP, \bar{r}_0)$ coincides with the span of $V_k$ defined below.
Let $\{x_j\}$ be the sequence generated by the full NGMRES. Given $k\in \mathbb{N}$, where $1\leq k\leq k^*$, we define $ V_k=\{ y_j\}_{j=1}^k$, where 
\begin{equation} \label{eq:defyj}
 y_j = \left\{
\begin{array}{ll}
P^{-1}(x_j-x_0)& \text{if } j=1,2,\cdots, k-1, \\
P^{-1}(x_{k-1}-x_0)-\bar{r}_{k-1} & \text{if } j=k.
\end{array}
\right.
\end{equation}
 
\begin{proof}
To prove $x_k=x_k^{Gr}$ for $1\leq k\leq k^*$, we only need to prove the following two Claims:
\begin{enumerate} 
    \item[(a)] For $1\leq k\leq k^*$, if $V_k$ is a basis for $\mathcal{K}_k(AP,r_0)$, then $x_k=x_k^{Gr}$.
    \item[(b)]  For $1\leq k\leq k^*$,  $V_k$ is a basis for $\mathcal{K}_k(AP,r_0)$.
\end{enumerate}
We first prove Claim (a). For a given $k$, where  $1\leq k\leq k^*$, from \eqref{eq:defyj}, we have  
\begin{equation} \label{eq:defPyj}
 Py_j = \left\{
\begin{array}{ll}
x_j-x_0& \text{if } j=1,2,\cdots, k-1,\\
x_{k-1}-x_0-P\bar{r}_{k-1} & \text{if } j=k.
\end{array}
\right.
\end{equation}
From \eqref{eq:xkp1-NG-expr} and \eqref{eq:defPyj}, the  $k$th iterate of the full NGMRES is given by
 \begin{align*}
x_k&=x_{k-1}-P\bar{r}_{k-1}+ \sum_{i=0}^{k-1} \beta_i^{(k-1)}  (x_{k-1}-P\bar{r}_{k-1}-x_{k-1-i})\\
 &=x_{k-1}-P\bar{r}_{k-1}+ \sum_{i=0}^{k-1} \beta_i^{(k-1)}  ((x_{k-1}-P\bar{r}_{k-1}-x_0)-(x_{k-1-i}-x_0))\\
&=x_0+Py_k + \sum_{i=0}^{k-2} \beta_i^{(k-1)} (Py_k-Py_{k-1-i})+\beta_{k-1}^{(k-1)}Py_k\\
&=x_0+ (1+\sum_{i=0}^{k-1} \beta_i^{(k-1)} )Py_k - \sum_{i=0}^{k-2} \beta_i^{(k-1)}Py_{k-1-i}\\
&=x_0-P\sum_{i=1}^{k}\gamma_i y_i,
\end{align*}
where $\gamma_i=\beta_{k-1-i}^{(k-1)}$ for $i=1,2,\cdots, k-1$ and $\gamma_k=-(1+\sum_{i=0}^{k-1} \beta_i^{(k-1)} )$. Let $\bm{\gamma}=(\gamma_1,\gamma_2,\cdots,\gamma_k)^T$. We know that the coefficient $\bm{\beta}^{(k-1)}$ of NGMRES solves the following least-squares problem
\begin{equation*}
    \min_{\bm{\beta}^{(k-1)}}\|Ax_k-b\|= \min_{\bm{\gamma}}\|\bar{r}_0-AP\sum_{i=1}^{k}\gamma_i y_i\|=\min_{v\in {\rm span}(V_k)}\|\bar{r}_0-APv\|. 
\end{equation*}
By the assumption that $V_k$ is a basis for $\mathcal{K}_k(AP,r_0)$  and the property of the right-preconditioned GMRES,  i.e., \eqref{eq:right-GMRES-mini}, we have $x_k=x_k^{Gr}$. 

Next, we first prove Claim (b) proceeding by induction on $k$. For $k=1$, $V_1=\{-\bar{r}_0\}$, which is obviously a basis for $\mathcal{K}_1(AP,\bar{r}_0)={\rm span}\{\bar{r}_0\}$. Thus, suppose that for $k< k^*$,  
\begin{equation*}  
    V_k=\{P^{-1}(x_1-x_0), \cdots, P^{-1}(x_{k-1}-x_0), P^{-1}(x_{k-1}-x_0)-\bar{r}_{k-1}\},
\end{equation*}
is a basis for $\mathcal{K}_k(AP,\bar{r}_0)$,
and we prove that  
 \begin{equation*}
    V_{k+1}=\{P^{-1}(x_1-x_0),  \cdots, P^{-1}(x_{k-1}-x_0), P^{-1}(x_{k}-x_0), P^{-1}(x_{k}-x_0)-\bar{r}_{k}\},
\end{equation*}
is a basis for $\mathcal{K}_{k+1}(AP,\bar{r}_0)$. Since $V_k$ is a basis for $\mathcal{K}_k(AP,\bar{r}_0)$, from Claim (a), we have $x_k=x_k^{Gr}=x_0+Pz$, where $z\in \mathcal{K}_k(AP,\bar{r}_0)$. Then $\bar{r}_k=\bar{r}_k^{Gr}=\bar{r}_0+APz \in \mathcal{K}_{k+1}(AP,\bar{r}_0)$.  Next, we show the following two properties:
\begin{itemize}
    \item $\bar{r}_k\notin \mathcal{K}_k(AP,\bar{r}_0)$;
    \item $z=P^{-1}(x_{k}-x_0) \notin \mathcal{K}_{k-1}(AP,\bar{r}_0)$.
\end{itemize}
We establish the above results using a proof by contradiction. If $\bar{r}_k\in \mathcal{K}_k(AP,\bar{r}_0)$, then $z\in \mathcal{K}_{k-1}(AP,\bar{r}_0)$. We can define $\tilde{x}=x_0+Pz$, which satisfies $\|A\tilde{x}-b\|<\|\bar{r}_{k-1}\|$ contradicting the fact that $x^{Gr}_{k-1}$ is the solution at the $(k-1)$th step of GMRES that minimizes the residual. This proves $\bar{r}_k\notin \mathcal{K}_k(AP,\bar{r}_0)$.

If $z=P^{-1}(x_k-x_0) \in \mathcal{K}_{k-1}(AP,\bar{r}_0)$, then $APz=\bar{r}_k-\bar{r}_0\in\mathcal{K}_k(AP,\bar{r}_0)$. This implies $\bar{r}_k \in\mathcal{K}_k(AP,\bar{r}_0)$, which contradicts the previous conclusion $\bar{r}_k\notin \mathcal{K}_k(AP,\bar{r}_0)$.

Note that the first $(k-1)$ elements of $V_k$ and $V_{k+1}$ are identical, linearly independent, and belong to $\mathcal{K}_{k-1}(AP,\bar{r}_0)$. Moreover, $P^{-1}(x_{k}-x_0)\in\mathcal{K}_k(AP,\bar{r}_0)$ but $P^{-1}(x_{k}-x_0)\notin\mathcal{K}_{k-1}(AP,\bar{r}_0)$. Finally, $P^{-1}(x_{k}-x_0)-\bar{r}_{k}\in\mathcal{K}_{k+1}(AP,\bar{r}_0)$ while $P^{-1}(x_{k}-x_0)-\bar{r}_{k}\notin\mathcal{K}_{k}(AP,\bar{r}_0)$.  This means that $V_{k+1}$ is a basis for $\mathcal{K}_{k+1}(AP,\bar{r}_0)$.
\end{proof}

\begin{example}\label{ex:Laplace}
We validate the above theorem by considering the two-dimensional Laplace equation. 
We construct $A$ using a centered‑difference discretization on an $N\times N$ grid with $h=1/(N+1)$,
scaling the matrix by $h^2$. The right‑hand side $b$ is taken to be a vector of ones to form a test problem. We use $P=L^{-1}$ as the preconditioner, where $L$ denotes the lower triangular part of $A$, and use the zero vector as the initial guess.  
\end{example}
\begin{figure}[H]
		\centering
		\includegraphics[width=0.49\textwidth]{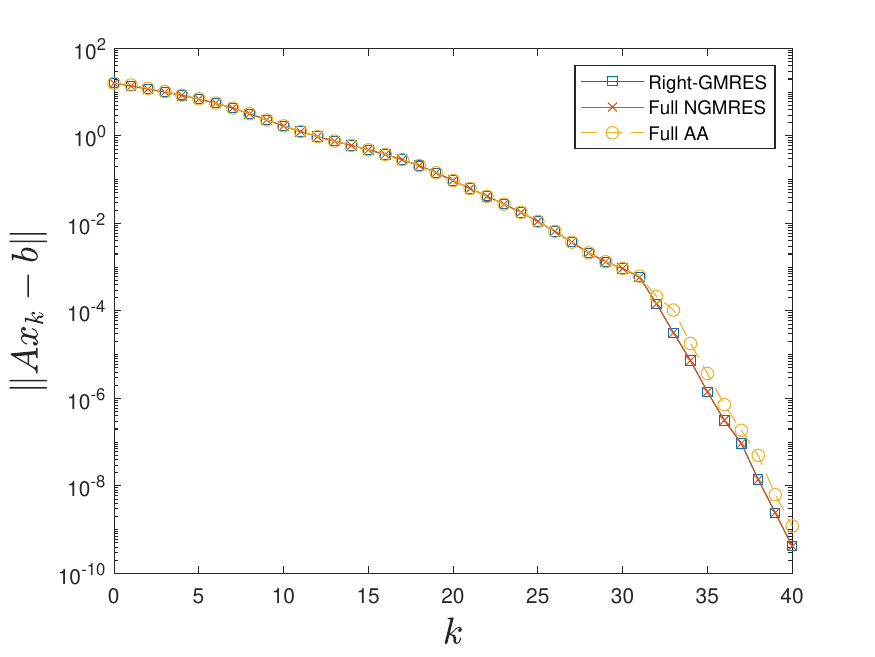}
        \includegraphics[width=0.49\textwidth]{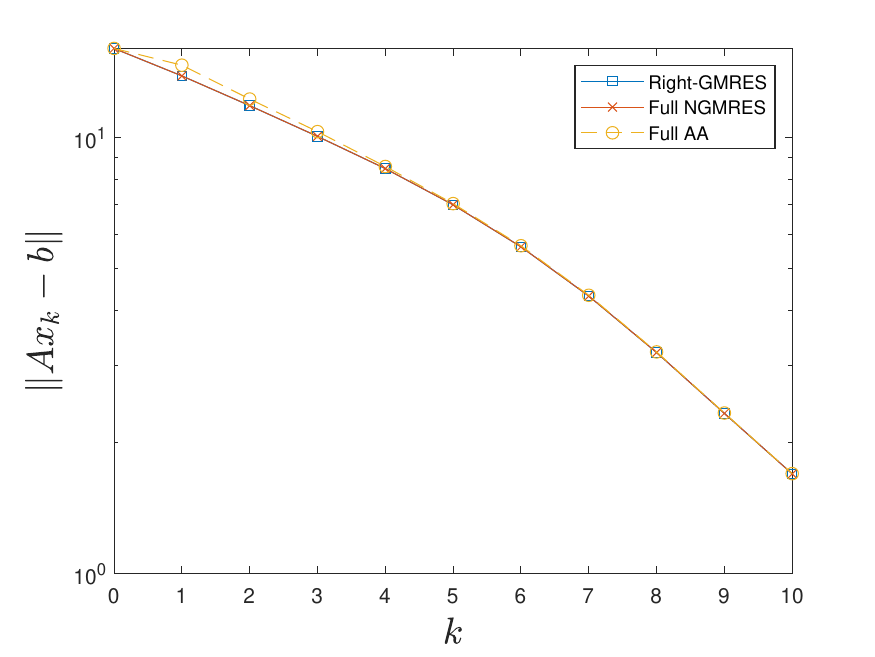}
		\caption{Example \ref{ex:Laplace} with $N=16$. Convergence history (residual norms) for the right-preconditioned GMRES, full NGRMES and full AA. The graph on the right shows the first 10 of the 40 iterations.}\label{fig:RGM-NG-AA}
\end{figure}
In the left-hand graph of Figure \ref{fig:RGM-NG-AA}, we see that the residual norm of right-preconditioned GMRES matches that of full NGMRES. In contrast, the residual norm of full AA is slightly larger than that of right-preconditioned GMRES (see the right‑hand graph). These results are consistent with our theorem. We remark that choosing small $N=16$ in the test ensures that the computing of $L^{-1}$ is sufficiently accurate.

In practice, we prefer to use a finite value of $m$ in NGMRES method. Although there is no general theoretical guidance on choosing $m$, \cite{GreifHe25NGMRES} shows that the depth $m$ does not affect the performance of NGMRES($m$) when applied to the unpreconditioned Richardson iteration for two special classes of matrices $A$. We extend this result to the preconditioned case.

 \begin{theorem}\label{thm:NGMRESm=NGMRES1-right}
Consider NGMRES($m$) presented in Algorithm \ref{alg:NGMRES}, used to accelerate preconditioned Richardson iteration  \eqref{eq:PR-FP} for solving $g(x)=Ax-b=0$, and also the right-preconditioned GMRES with the right preconditioner $P$ for solving $Ax=b$.  Assume $(AP)^T=AP$ or $AP=\tau I + S$, where $\tau \in \mathbb{R}$ and  $S^T=-S$, and both methods use the same initial guess $x_0$. Let $\{x_i^{NG(m)}\}$ and $\{x_i^{Gr}\}$ be the sequences generated by  NGMRES($m$) and the right-preconditioned GMRES, respectively. Furthermore, assume for some $k^*\in \mathbb{Z}^+$, $\bar{r}^{Gr}_{k^*-1}\neq 0$ and $\|\bar{r}^{Gr}_k\|< \|\bar{r}^{Gr}_{k-1}\|$  for all $k\in\mathbb{Z}^+$ such that $0<k<k^*$.  Then, $x_k^{NG(m)}=x_k^{Gr}$ for $\forall m\in\mathbb{Z}^+$ and  $0\leq k\leq k^*$.
\end{theorem}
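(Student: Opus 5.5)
Our plan is to reduce the statement to Theorem \ref{thm:fullNG=G}: we show that, under either structural assumption on $AP$, the windowed NGMRES($m$) iterates coincide with the full NGMRES iterates for $0\le k\le k^*$. Theorem \ref{thm:fullNG=G} then gives $x_k^{NG(m)}=x_k=x_k^{Gr}$. Since NGMRES($m$) with $m_k=\min\{k,m\}$ agrees with full NGMRES for $k\le m$, it suffices to show the following. If the first $k$ iterates agree with right-preconditioned GMRES, then the least-squares problem \eqref{eq:NG-LSP-P} restricted to the window $\bar r_k,\dots,\bar r_{k-m}$ already attains the full GMRES minimizer; the window $m=1$ is the extreme case to check.

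We work in residual space. By the previous theorem, $\bar r_{k+1}=H\bar r_k-F_k\bm\beta^{(k)}$ with $H=I-AP$. Equivalently, $\bar r_{k+1}$ is the minimal-norm element of the affine set
\[
(1+\textstyle\sum_i\beta_i)H\bar r_k-\sum_i\beta_i\bar r_{k-i},
\]
that is, of $H\bar r_k+{\rm span}\{H\bar r_k-\bar r_{k-i}\}_{i=0}^{m_k}$. We argue by induction and assume $\bar r_j=\bar r_j^{Gr}$ for $j\le k$. Then $\bar r_k^{Gr}$ lies in $\bar r_0+AP\mathcal K_k(AP,\bar r_0)$, and the full search set for step $k+1$ is
\[
\bar r_0+AP\mathcal K_{k+1}={\rm span}\{\bar r_k,\dots,\bar r_0, AP\bar r_k\}\cap\{\text{affine constraint}\}.
\]
This uses Claim (b) of the proof of Theorem \ref{thm:fullNG=G}: the residuals $\bar r_0,\dots,\bar r_k$ together with $AP\bar r_k$ span $\mathcal K_{k+2}(AP,\bar r_0)$ under strict decrease. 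The GMRES residual $\bar r_{k+1}^{Gr}$ is characterized by orthogonality to $AP\mathcal K_{k+1}(AP,\bar r_0)={\rm span}\{AP\bar r_0,\dots,AP\bar r_k\}$. We therefore need to show that the window-$m$ minimizer already satisfies $\bar r_{k+1}\perp AP\bar r_j$ for $j<k-m$.

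The key step, and the main obstacle, is this orthogonality for old residuals, and it is where the structure of $AP$ enters. Write $\bar r_{k+1}=c_0AP\bar r_k+\sum_{i\le m_k}c_i'\bar r_{k-i}$ (plus the $H$ rearrangement). For $j<k-m$ we must show the inner products $(AP\bar r_j)^T\bar r_{k-i}$ and $(AP\bar r_j)^T AP\bar r_k$ vanish. Since the $\bar r_i$ are GMRES residuals, $\bar r_i\perp AP\mathcal K_i$.

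If $AP$ is symmetric, then $(AP\bar r_j)^T\bar r_{k-i}=\bar r_j^T(AP\bar r_{k-i})$. Both terms are controlled by the GMRES orthogonality: $AP\bar r_j\in AP\mathcal K_{j+1}\subset AP\mathcal K_{k-i}$ whenever $j<k-i$, so $\bar r_{k-i}\perp AP\bar r_j$. For the product $(AP\bar r_j)^T AP\bar r_k=\bar r_k^T (AP)^2\bar r_j$, we have $(AP)^2\bar r_j\in AP\mathcal K_{j+2}\subset AP\mathcal K_k$ when $j+2\le k$, which holds since $j<k-m\le k-1$. We must double-check the boundary $j=k-1$; this is excluded for $m\ge1$.

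If $AP=\tau I+S$, we use $AP^T=2\tau I-AP$. The adjoint is then a polynomial of degree one in $AP$, and the same Krylov-inclusion argument goes through. Consequently the window-$m$ least-squares solution already satisfies all orthogonality conditions characterizing $\bar r^{Gr}_{k+1}$, and the two coincide.

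Finally, we translate back from residuals to iterates. Since $A$ is invertible, $\bar r_{k+1}=\bar r_{k+1}^{Gr}$ gives $x_{k+1}^{NG(m)}=x_{k+1}^{Gr}$. The hypotheses $\bar r^{Gr}_{k^*-1}\ne0$ and strict decrease guarantee nondegeneracy of the Krylov bases, exactly as in Theorem \ref{thm:fullNG=G}, so the induction runs up to $k=k^*$.
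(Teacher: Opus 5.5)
Your proof is correct, but it runs in the opposite direction from the paper's. The paper only sketches its argument. It transplants \cite[Theorem 3.6]{GreifHe25NGMRES} with $A\to AP$ and $M\to H=I-AP$, taking the structural input from \cite[Lemma 3.7]{GreifHe25NGMRES}. The relations in Lemma~\ref{lem:rk-rightGMRES-orth} are exactly the windowed normal equations $\bar r_{k+1}^TF_k=0$ evaluated at the GMRES residual, so that route shows the GMRES residual solves the windowed problem. You instead show that the windowed minimizer, which under the induction hypothesis automatically lies in $\bar r_0+AP\mathcal{K}_{k+1}(AP,\bar r_0)$, satisfies the full Galerkin condition. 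Your version is self-contained and pinpoints where the hypothesis on $AP$ enters. The terms $(AP\bar r_j)^T\bar r_{k-i}$ vanish for every $AP$, so your symmetric rewrite there is superfluous. The structure is needed only to kill $\bar r_k^T(AP)^TAP\bar r_j$ for $j<k-m$. Your argument therefore extends verbatim to any $AP$ with $(AP)^T=\phi(AP)$ and $\deg\phi\le m$. The opening reduction to Theorem~\ref{thm:fullNG=G} is not needed, since your induction compares with GMRES directly.

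One step still has to be supplied. The windowed normal equations give orthogonality to $AP\bar r_k$ and to $\bar r_{k-i}-\bar r_k$ ($1\le i\le m_k$), not to $AP\bar r_{k-1},\dots,AP\bar r_{k-m}$. Your ``therefore it suffices to treat $j<k-m$'' thus requires that these vectors, together with $AP\bar r_0,\dots,AP\bar r_{k-m-1}$, span $AP\mathcal{K}_{k+1}(AP,\bar r_0)$. This is true, and here is how to show it.
Write $\bar r_j=p_j(AP)\bar r_0$ with $p_j(0)=1$. Strict decrease and $\bar r^{Gr}_{k^*-1}\neq0$ give $\dim\mathcal{K}_{k+1}(AP,\bar r_0)=k+1$ and $\deg p_j=j$ exactly for $j\le k$. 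Consider the $p_{k-i}-p_k$ ($1\le i\le m_k$) together with the $t\,s(t)$ for $s\in S_{k-m-1}$ (empty if $k\le m$). A leading-coefficient comparison shows these are $k$ independent elements of the $k$-dimensional space $\{t\,s(t): s\in S_{k-1}\}$, and $t\,p_k(t)$ supplies the top degree. Incidentally, the residual-spanning fact you attribute to Claim (b) is not what that claim states, since Claim (b) concerns $V_k$, though the fact itself is true.
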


We first present a lemma that will be used for the proof.
\begin{lemma}\label{lem:rk-rightGMRES-orth}
For $0\leq i\leq j\leq k+1$, the residuals of the right-preconditioned GMRES satisfy
\begin{equation}
\bar{r}_{k+1}^T(AP)\bar{r}_k=0,\quad\bar{r}_{k+1}^T(\bar{r}_j-\bar{r}_i)=0, \quad \text{and}\quad  \bar{r}_{k} \perp AP\mathcal{K}_k(AP,\bar{r}_0)
\end{equation}
\end{lemma}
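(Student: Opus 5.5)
The whole lemma should follow from the variational characterization of right-preconditioned GMRES in \eqref{eq:right-GMRES-mini}. For every $k$, $x_k^{Gr}=x_0+Pz_k$ with $z_k\in\mathcal{K}_k(AP,\bar{r}_0)$ minimizing $\|\bar{r}_0+APz\|$ over $z\in\mathcal{K}_k(AP,\bar{r}_0)$. This is an ordinary linear least-squares problem. Its minimizer is characterized by the normal equations, i.e., the residual $\bar{r}_k=\bar{r}_0+APz_k$ is orthogonal to the range of the map $z\mapsto APz$ restricted to $\mathcal{K}_k(AP,\bar{r}_0)$. This is exactly the third claim, $\bar{r}_k\perp AP\mathcal{K}_k(AP,\bar{r}_0)$, and I will prove it first, for every index $k$. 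No assumption on strict decrease of residuals is needed here, since only the optimality condition is used.

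For the remaining claims I use the structure of the residuals. Since $\bar{r}_j=\bar{r}_0+APz_j$ with $z_j\in\mathcal{K}_j(AP,\bar{r}_0)$, I have $\bar{r}_j\in\mathcal{K}_{j+1}(AP,\bar{r}_0)$. Moreover, for $0\le i\le j$,
\[
\bar{r}_j-\bar{r}_i=AP(z_j-z_i),\qquad z_j-z_i\in\mathcal{K}_j(AP,\bar{r}_0),
\]
by nestedness of the Krylov subspaces. Applying the third claim at index $k+1$ gives $\bar{r}_{k+1}\perp AP\mathcal{K}_{k+1}(AP,\bar{r}_0)$. Hence for $0\le i\le j\le k+1$ I have $z_j-z_i\in\mathcal{K}_{k+1}(AP,\bar{r}_0)$, and therefore $\bar{r}_{k+1}^T(\bar{r}_j-\bar{r}_i)=0$.

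For the first identity, $\bar{r}_k\in\mathcal{K}_{k+1}(AP,\bar{r}_0)$ gives $AP\bar{r}_k\in AP\mathcal{K}_{k+1}(AP,\bar{r}_0)$. Orthogonality of $\bar{r}_{k+1}$ to this space then yields $\bar{r}_{k+1}^T(AP)\bar{r}_k=0$.

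The only step requiring care is the bookkeeping of subspace indices. I must make sure that $\bar{r}_k$ lies in $\mathcal{K}_{k+1}$, not merely $\mathcal{K}_{k+2}$, and that the case $j=k+1$ in the difference claim uses $z_{k+1}\in\mathcal{K}_{k+1}$. Both follow from the representation \eqref{eq:RG-xk-update}, so I expect no genuine obstacle. In particular, the argument never requires the basis or dimension statements used in Theorem~\ref{thm:fullNG=G}; it uses only membership and orthogonality, so it remains valid even in the degenerate case where the Krylov space stagnates.
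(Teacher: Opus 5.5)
Your proof is correct. The normal-equation characterization of \eqref{eq:right-GMRES-mini} gives $\bar{r}_k\perp AP\mathcal{K}_k(AP,\bar{r}_0)$ for every $k$. The memberships $\bar{r}_k\in\mathcal{K}_{k+1}(AP,\bar{r}_0)$ and $\bar{r}_j-\bar{r}_i=AP(z_j-z_i)\in AP\mathcal{K}_{k+1}(AP,\bar{r}_0)$ for $i\le j\le k+1$ then yield the other two identities, and your index bookkeeping holds up.

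The paper gives no standalone proof of this lemma. It treats it as the unpreconditioned GMRES orthogonality result of \cite{GreifHe25NGMRES} with $A$ replaced by $AP$. That transfer tacitly uses the fact that right-preconditioned GMRES on $Ax=b$ is plain GMRES on $(AP)u=b$ with $x=Pu$, producing the same residuals $\bar{r}_k$. Your argument is the self-contained version of that reduction and rests on the same least-squares optimality. It avoids both the citation and the change of variables, and it makes explicit that only optimality and nestedness of the Krylov spaces are needed, not the strict-decrease hypothesis of Theorem \ref{thm:NGMRESm=NGMRES1-right}.
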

Since the proof of Theorem \ref{thm:NGMRESm=NGMRES1-right} can be obtained directly from the proof of \cite[Theorem 3.6]{GreifHe25NGMRES}. In the following, we present a sketch of the proof of Theorem \ref{thm:NGMRESm=NGMRES1-right} 
\begin{proof}
The key step in the proof of Theorem \ref{thm:NGMRESm=NGMRES1-right} is based on \cite[Lemma 3.7]{GreifHe25NGMRES}, where we replace $A$ with $AP$. We then proceed by following the proof of \cite[Theorem 3.6]{GreifHe25NGMRES}, replacing $A$ with $AP$ and $M=I-A$ with $H=I-AP$, applying the Lemma \ref{lem:rk-rightGMRES-orth}, and finally using induction. 
\end{proof}
Theorem \ref{thm:NGMRESm=NGMRES1-right} shows that for a nonsymmetric $A$, one can construct a matrix $P$ such that $AP$ becomes symmetric. In this case, applying NGMRES($m$) with $m=1$ yields the same performance as NGMRES($m$) with $m>1$. Consequently, this approach reduces the computational cost. This may provide new insight into the development of efficient numerical methods for nonsymmetric linear systems.

 \section{New Variants Using Modified Least-Squares Problem}\label{sec:new}	
From the previous sections, we know that the least-squares problem for NGMRES minimizes the classical residual, whereas the corresponding problem in AA minimizes neither the classical residual nor the preconditioned residual. Motivated by the derivation of NGMRES, we propose two new variants of AA and  one of NGMRES by using modified least-squares problems. Specifically, for AA we consider choosing ${\bm \alpha}^{(k)}$ that minimizes either $g(x_{k+1})$ (i.e., classical residual in the linear case), which corresponds to the new approach presented in Algorithm \ref{alg:AAg}, denoted as AAg($m$),  or minimizes $r_{k+1}$ (i.e., the preconditioned residual in the linear case) given in Algorithm \ref{alg:AAr}, denoted as AAr($m$). For NGMRES, we consider choosing ${\bm \beta}^{(k)}$ that minimizes  $r_{k+1}$, leading to Algorithm \ref{alg:NGMRESr}, denoted as NGMRESr($m$).

In the following, we will analyze the properties of these new approaches applied to the preconditioned Richardson iteration for solving $g(x)=Ax-b=0$.

	\begin{algorithm}[H] 
		\caption{AAg($m$):  Anderson acceleration with g-norm} \label{alg:AAg}
		\begin{algorithmic}[1] 
			\STATE  \textbf{input:} $x_0$  and $m\geq0$ with $m\in\mathbb{N}$
			\FOR {$k=0,1,\cdots$ until convergence }
			\STATE compute 
			\begin{equation*} 
				x_{k+1} = q(x_k) + \sum_{i=1}^{m_k}\alpha_i^{(k)} \left(q(x_k)- q(x_{k-i}) \right),
			\end{equation*}
			where $m_k=\min\{k,m\}$ and $\bm{\alpha}^{(k)}=\big(\alpha_1^{(k)},\cdots, \alpha_{m_k}^{(k)}\big)$ is obtained by solving the  least-squares problem
			\begin{equation}\label{eq:min-AAg}
	\min_{\bm{\alpha}^{(k)}} \left\|g(q(x_k))+\sum_{i=1}^{m_k} \alpha_i^{(k)} \left(g(q(x_k))-g(q(x_{k-i})) \right) \right\|^2.
			\end{equation}
			\ENDFOR
        \STATE  \textbf{output:} $x_{k+1}$
		\end{algorithmic}
	\end{algorithm}

	\begin{algorithm}[H] 
		\caption{AAr($m$):  Anderson acceleration with r-norm} \label{alg:AAr}
		\begin{algorithmic}[1] 
			\STATE  \textbf{input:} $x_0$  and $m\geq0$ with $m\in\mathbb{N}$
			\FOR {$k=0,1,\cdots$ until convergence }
			\STATE compute 
			\begin{equation*} 
				x_{k+1} = q(x_k) + \sum_{i=1}^{m_k}\alpha_i^{(k)} \left(q(x_k)- q(x_{k-i}) \right),
			\end{equation*}
			where $m_k=\min\{k,m\}$ and $\bm{\alpha}^{(k)}=\big(\alpha_1^{(k)},\cdots, \alpha_{m_k}^{(k)}\big)$ is obtained by solving the  least-squares problem
			\begin{equation}\label{eq:min-AAr}
				\min_{\bm{\alpha}^{(k)}} \left\|r(q(x_k))+\sum_{i=1}^{m_k} \alpha_i^{(k)} \left(r(q(x_k))-r(q(x_{k-i})) \right) \right\|^2.
			\end{equation}
			\ENDFOR
            \STATE  \textbf{output:} $x_{k+1}$
		\end{algorithmic}
	\end{algorithm}
 	\begin{algorithm}[H] 
 		\caption{NGMRESr($m$):  Nonlinear GMRES with r-norm} \label{alg:NGMRESr}
 		\begin{algorithmic}[1]
 			\STATE  \textbf{input:} $x_0$  and $m\geq0$ with $m\in\mathbb{N}$
 			\FOR {$k=0,1,\cdots$ until convergence}
 			\STATE compute 
 			\begin{equation*} 
 				x_{k+1} = q(x_k) + \sum_{i=0}^{m_k}\beta_i^{(k)} \left(q(x_k)- x_{k-i} \right),
 			\end{equation*}
 			where $m_k=\min\{k,m\}$ and $\bm{\beta}^{(k)}=\big(\beta_0^{(k)},\beta_1^{(k)},\cdots, \beta_{m_k}^{(k)}\big)$ is obtained by solving the  least-squares problem
 			\begin{equation}\label{eq:min-NGr} 
 				\min_{\bm{\beta}^{(k)}} \left\| r(q(x_k))+\sum_{i=0}^{m_k} \beta_i^{(k)} \left(r(q(x_k))-r(x_{k-i}) \right) \right\|^2.
 			\end{equation}
 			\ENDFOR
        \STATE  \textbf{output:} $x_{k+1}$
 		\end{algorithmic}	 
 	\end{algorithm}
In Algorithm \ref{alg:AAg}, the update $x_{k+1}$ can be rewritten as
\begin{equation*}
    x_{k+1} =(I-PA)x_k+Pb+\sum_{i=1}^{m_k} \alpha_i^{(k)}(I-PA)(x_k-x_{k-i}).
\end{equation*}

\begin{theorem}
Consider AAg($m$) presented in Algorithm \ref{alg:AAg} to accelerate preconditioned Richardson iteration, \eqref{eq:PR-FP}. Let $H=I-AP$ and $\bar{r}_{k+1}=Ax_{k+1}-b$. The least-squares problem defined by \eqref{eq:min-AAg} in  Algorithm \ref{alg:AAg} can be rewritten as
\begin{equation*} 
\min_{ \bm{\alpha}^{(k)}} \|\bar{r}_{k+1}\|^2=\min_{{\bm \alpha}^{(k)}} \|H(\bar{r}_k-\bar{R}_k{\bm \alpha}^{(k)})\|^2,
\end{equation*}
where
\begin{equation*} 
    \bar{R}_k=[\bar{r}_{k-1}-\bar{r}_k, \bar{r}_{k-2}-\bar{r}_k, \cdots, \bar{r}_{k-m_k}-\bar{r}_k].
\end{equation*}
For $i, j=0,1,2, \dots, m_k$, the classical residuals satisfy
\begin{equation*}
  \bar{r}_{k+1}^T H(\bar{r}_{k-j} - \bar{r}_{k-i})=0,
\end{equation*}
and
\begin{equation}\label{eq:AAg-decrease-P}
 \|\bar{r}_{k+1}\|\leq  \|H\bar{r}_k\|.
\end{equation}
Moreover, if $\|H\|<1$,
\begin{equation*}
 \|\bar{r}_{k+1}\|<\|\bar{r}_k\|,
\end{equation*}
which means AAg($m$) will converge. 
\end{theorem}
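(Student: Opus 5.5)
The plan is to follow the pattern of the proofs of Theorem \ref{thm:AA-org-P} and the NGMRES theorem above. Everything rests on one identity: the vector inside the norm in \eqref{eq:min-AAg} equals both $\bar{r}_{k+1}$ and $H(\bar{r}_k-\bar{R}_k{\bm \alpha}^{(k)})$. The orthogonality and the norm bounds then come from the normal equations.

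First I rewrite the objective. Use $q(x)=(I-PA)x+Pb$ and $g(x)=Ax-b$, and note that $g(q(x_k))-g(q(x_{k-i}))=A(q(x_k)-q(x_{k-i}))$. Then the expression inside the norm in \eqref{eq:min-AAg} becomes
\[
A\Big(q(x_k)+\sum_{i=1}^{m_k}\alpha_i^{(k)}(q(x_k)-q(x_{k-i}))\Big)-b=Ax_{k+1}-b=\bar{r}_{k+1}.
\]
This gives the left-hand form of the minimization.

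Next I express this vector in terms of classical residuals. The key identity is $A(I-PA)=(I-AP)A=HA$. Hence
\[
g(q(x))=A(I-PA)x+APb-b=HAx-(I-AP)b=H(Ax-b)=H\bar{r}(x).
\]
So $g(q(x_k))=H\bar{r}_k$ and $g(q(x_k))-g(q(x_{k-i}))=H(\bar{r}_k-\bar{r}_{k-i})$. The objective vector is therefore
\[
H\Big(\bar{r}_k+\sum_i\alpha_i^{(k)}(\bar{r}_k-\bar{r}_{k-i})\Big)=H(\bar{r}_k-\bar{R}_k{\bm \alpha}^{(k)}),
\]
which is the second form.

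Now I use the least-squares optimality. The normal equations give $\bar{r}_{k+1}^T H\bar{R}_k=0$, that is, $\bar{r}_{k+1}^TH(\bar{r}_{k-s}-\bar{r}_k)=0$ for $s=1,\dots,m_k$. Taking differences of two such relations (and the trivial case $s=0$) gives $\bar{r}_{k+1}^TH(\bar{r}_{k-j}-\bar{r}_{k-i})=0$ for all $i,j$.

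For the bound \eqref{eq:AAg-decrease-P}, I would compute
\[
\|\bar{r}_{k+1}\|^2=\bar{r}_{k+1}^T H(\bar{r}_k-\bar{R}_k{\bm \alpha}^{(k)})=\bar{r}_{k+1}^TH\bar{r}_k\le\|\bar{r}_{k+1}\|\,\|H\bar{r}_k\|.
\]
Dividing by $\|\bar{r}_{k+1}\|$, with the case $\bar{r}_{k+1}=0$ being trivial, gives $\|\bar{r}_{k+1}\|\le\|H\bar{r}_k\|$. Alternatively, ${\bm \alpha}^{(k)}=0$ is feasible, which gives the same bound directly. Finally, if $\|H\|<1$, then $\|\bar{r}_{k+1}\|\le\|H\|\|\bar{r}_k\|<\|\bar{r}_k\|$ whenever $\bar{r}_k\neq0$. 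Iterating gives $\|\bar{r}_{k}\|\le\|H\|^k\|\bar{r}_0\|\to0$, which is the convergence claim.

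The only step needing care is the commutation identity $A(I-PA)=HA$, which converts the $q$-based differences into $H$ times residual differences. The rest is routine, so I do not expect a serious obstacle. One point to state explicitly is that strict decrease needs $\bar{r}_k\neq0$; otherwise the method has already converged.
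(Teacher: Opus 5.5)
Your proposal is correct and takes essentially the same route as the paper: identify the objective vector with $\bar{r}_{k+1}$, rewrite it as $H(\bar{r}_k-\bar{R}_k\bm{\alpha}^{(k)})$ via $A(I-PA)=(I-AP)A$, then obtain the orthogonality from the normal equations and the bound from $\|\bar{r}_{k+1}\|^2=\bar{r}_{k+1}^TH\bar{r}_k$. You also state explicitly that strict decrease requires $\bar{r}_k\neq 0$, which the paper leaves implicit — a small improvement.
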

\begin{proof}
 In Algorithm \ref{alg:AAg}, we simplify
\begin{align*}
   & g(q(x_k))+\sum_{i=1}^{m_k} \alpha_i^{(k)} \left(g(q(x_k))-g(q(x_{k-i})) \right)\\
    =& A((I-PA)x_k+Pb)-b+\sum_{i=1}^{m_k} \alpha_i^{(k)} A(I-PA)\left( x_k-x_{k-i} \right)\\
    =& A\left( (I-PA)x_k +Pb + \sum_{i=1}^{m_k} \alpha_i^{(k)} (I-PA)( x_k-x_{k-i}) \right) - b  \\
     =& A\left( q(x_k) + \sum_{i=1}^{m_k} \alpha_i^{(k)} (q(x_k)- q(x_{k-i}) ) \right) - b  \\
    =&Ax_{k+1}-b\\
    =&g(x_{k+1}).
\end{align*} 
Now, we rewrite the above formula as
\begin{align*}
    \bar{r}_{k+1}=& A((I-PA)x_k+Pb)-b+\sum_{i=1}^{m_k} \alpha_i^{(k)} A(I-PA)\left( x_k-x_{k-i} \right)\\
    =&(I-AP)(Ax_k-b) + \sum_{i=1}^{m_k}\alpha_i^{(k)} (I-AP)(Ax_k-Ax_{k-i})\\
    =&H\bar{r}_k + H\sum_{i=1}^{m_k}\alpha_i^{(k)} (\bar{r}_k-\bar{r}_{k-i})\\
    = &H(\bar{r}_k-\bar{R}_k\bm{\alpha}^{(k)}).
\end{align*}
According to the property of least-squares problem, we have
\begin{equation*}
\bar{r}_{k+1}^TH\bar{R}_k=0.
\end{equation*}
It can be rewritten as
\begin{equation*}
    \bar{r}_{k+1}^TH(\bar{r}_{k-j}-\bar{r}_{k-i})=0.
\end{equation*}
Moreover, from $\bar{r}_{k+1}^T\bar{r}_{k+1}=\bar{r}_{k+1}^TH(\bar{r}_k-\bar{R}_k\bm{\alpha}^{(k)})=\bar{r}_{k+1}^TH\bar{r}_k$, we have $$\|\bar{r}_{k+1}\|\leq \|H\bar{r}_k\|\leq \|H\| \|\bar{r}_k\|,$$
which gives the desired results.
\end{proof}

\begin{theorem}
Consider AAr($m$) presented in Algorithm \ref{alg:AAr} to accelerate preconditioned Richardson iteration, \eqref{eq:PR-FP}. Let $B=I-PA$ and $r_{k+1}=P(Ax_{k+1}-b)$. The least-squares problem defined by \eqref{eq:min-AAr} in  Algorithm \ref{alg:AAr} can be rewritten as
\begin{equation*} 
\min_{ \bm{\alpha}^{(k)}} \|r_{k+1}\|^2=\min_{{\bm \alpha}^{(k)}} \|B(r_k-R_k{\bm \alpha}^{(k)})\|^2,
\end{equation*}
where $R_k$ is defined in \eqref{eq:defRk}.

Moreover, for $i, j=0,1,2, \dots, m_k$, the preconditioned residuals satisfy
\begin{equation*}
r_{k+1}^TB(r_{k-j} - r_{k-i})=0,
\end{equation*}
and
\begin{equation}\label{eq:AAg-decrease-P}
 \|r_{k+1}\|\leq  \|Br_k\|.
\end{equation}
If $\|B\|<1$,
\begin{equation*}
 \|r_{k+1}\|<\|r_k\|,
\end{equation*}
which means AAr($m$) will converge. 
\end{theorem}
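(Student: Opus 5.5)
The argument mirrors the proof of the AAg($m$) result, with the classical residual $g$ replaced by the preconditioned residual $r(x)=P(Ax-b)$ and $H=I-AP$ replaced by $B=I-PA$. First, I will identify the objective in \eqref{eq:min-AAr} with $r(x_{k+1})$. Since $r(x)=P(Ax-b)$ is affine, $r(q(x_k))-r(q(x_{k-i}))=PA(q(x_k)-q(x_{k-i}))$. Therefore
\begin{equation*}
r(q(x_k))+\sum_{i=1}^{m_k}\alpha_i^{(k)}\bigl(r(q(x_k))-r(q(x_{k-i}))\bigr)
= P\Bigl(A\Bigl(q(x_k)+\sum_{i=1}^{m_k}\alpha_i^{(k)}(q(x_k)-q(x_{k-i}))\Bigr)-b\Bigr)=r(x_{k+1}),
\end{equation*}
where the last equality is the AAr update formula. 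This gives the left-hand side $\min\|r_{k+1}\|^2$.

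Next, I will rewrite $r_{k+1}$ in terms of past residuals. From the rewritten update $x_{k+1}=(I-PA)x_k+Pb+\sum_i\alpha_i^{(k)}(I-PA)(x_k-x_{k-i})$, applying $PA$ and subtracting $Pb$ gives
\[
r_{k+1}=PA(I-PA)x_k+PAPb-Pb+\sum_i\alpha_i^{(k)}PA(I-PA)(x_k-x_{k-i}).
\]
The key commutation is $PA(I-PA)=(I-PA)PA$, together with $PAPb-Pb=-(I-PA)Pb$. These yield $r_{k+1}=B(PAx_k-Pb)+\sum_i\alpha_i^{(k)}B(r_k-r_{k-i})=B(r_k-R_k\bm{\alpha}^{(k)})$. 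This is exactly the same computation as \eqref{eq:AA-rk1-alt} in the proof of Theorem~\ref{thm:AA-org-P}, so I will simply cite it. The difference is that now the least-squares objective is $r_{k+1}$ itself, not $B^{-1}r_{k+1}$, so no invertibility of $B$ is needed.

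Since $\bm{\alpha}^{(k)}$ minimizes $\|Br_k-BR_k\bm{\alpha}\|$, the normal equations give $r_{k+1}^TBR_k=0$. Hence $r_{k+1}^TB(r_{k-s}-r_k)=0$ for $s=1,\dots,m_k$. Taking differences of these relations gives $r_{k+1}^TB(r_{k-j}-r_{k-i})=0$ for all $i,j$, where the case involving index $0$ is already covered by the relations themselves. Then
\[
\|r_{k+1}\|^2=r_{k+1}^TB(r_k-R_k\bm{\alpha}^{(k)})=r_{k+1}^TBr_k,
\]
and Cauchy--Schwarz gives $\|r_{k+1}\|\le\|Br_k\|\le\|B\|\|r_k\|$. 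If $\|B\|<1$ and $r_k\neq0$, this gives strict decrease.

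There is no real obstacle here. The only point needing care is the commutation step $PA(I-PA)=(I-PA)PA$ that turns $r_{k+1}$ into $B(\cdot)$, and it is already carried out in Theorem~\ref{thm:AA-org-P}. One should also note that strict decrease requires $r_k\neq0$; otherwise the iteration has already converged.
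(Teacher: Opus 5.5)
Your proposal is correct and follows essentially the same route as the paper: you identify the objective with $r(x_{k+1})$ using the affinity of $r$, then write $r_{k+1}=B(r_k-R_k\bm{\alpha}^{(k)})$, and finish with the normal equations $r_{k+1}^TBR_k=0$ and Cauchy--Schwarz. (The paper obtains $r_{k+1}=B(r_k-R_k\bm{\alpha}^{(k)})$ via $r(q(x_j))=Br_j$, which is the same commutation $PA(I-PA)=(I-PA)PA$ you cite from \eqref{eq:AA-rk1-alt}.) Your remark that strict decrease requires $r_k\neq 0$ is a correct refinement that the paper leaves implicit.
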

\begin{proof}
In Algorithm \ref{alg:AAr}, by standard calculation, it can be shown that
\begin{align*}
    &r(q(x_k))+\sum_{i=1}^{m_k} \alpha_i^{(k)} \left( r(q(x_k)) -r(q(x_{k-i}))\right) \\
    =&P(A((I-PA)x_k+Pb)-b) +  PA(I-PA)\sum_{i=1}^{m_k} \alpha_i^{(k)}(x_k-x_{k-i}) \\
    =&P(Ax_{k+1}-b)\\
    =&r(x_{k+1}).
\end{align*}
Note that $r(q(x_j)) =(I-PA)P(Ax_j-b)=(I-PA)r_j$. Next, we rewrite 
\begin{align*}
   r_{k+1}=&r(q(x_k))+\sum_{i=1}^{m_k} \alpha_i^{(k)} \left( r(q(x_k)) -r(q(x_{k-i}))\right)\\
    =&(I-PA)(r_k + \sum_{i=1}^{m_k} \alpha_i^{(k)}(r_k-r_{k-i}) )\\
    =& B (r_k-R_k\bm{\alpha}^{(k)}).
\end{align*}
Using the property of least-squares problem, we have $r_{k+1}^TBR_k=0$, which can be rewritten as $r_{k+1}^TB(r_{k-i}-r_{k-j})=0$ for $0\leq i, j \leq m_k$. Moreover, $$r_{k+1}^Tr_{k+1}=r_{k+1}^TB(r_k-R_k\bm{\alpha}^{(k)})=r_{k+1}^TBr_k,$$
which gives $\|r_{k+1}\|\leq \|Br_k\|\leq \|B\|\|r_k\|$.
\end{proof}

\begin{theorem}\label{thm:NGMRESr}
Consider NGMRESr($m$) presented in Algorithm \ref{alg:NGMRESr} to accelerate preconditioned Richardson iteration, \eqref{eq:PR-FP}. Let $B=I-PA$ and $r_{k+1}=P(Ax_{k+1}-b)$. Then, the least-squares problem defined by \eqref{eq:min-NGr}  can be rewritten as 
\begin{equation}\label{eq:NGr-LSP-P}
 \min_{{\bm \beta}^{(k)}} \|r_{k+1}\|^2= \min_{{\bm \beta}^{(k)}} \|Br_k-E_k{\bm \beta}^{(k)}\|^2,
\end{equation}
where
\begin{equation}\label{eq:defEk}
    E_k=[r_k -Br_k, r_{k-1}-Br_k, \cdots, r_{k-m_k}-Br_k].
\end{equation}
For $i, j=0,1,2, \dots, m_k$, the classical residuals satisfy
\begin{equation*}
r_{k+1}^T(PA)r_k=0, \quad  r_{k+1}^T (r_{k-j} - r_{k-i})=0,
\end{equation*}
and
\begin{equation}\label{eq:NGr-decrease-P}
 \|r_{k+1}\|\leq  \|r_k\|.
\end{equation}
\end{theorem}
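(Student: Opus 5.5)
The plan mirrors the proof of the NGMRES theorem for the classical residual, with $g$ replaced by $r$ and $H=I-AP$ replaced by $B=I-PA$.

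\textbf{Step 1: the objective in \eqref{eq:min-NGr} is $r_{k+1}$.} Since $r(x)=P(Ax-b)$ is affine with linear part $PA$, we have
\[
r(q(x_k))+\sum_{i=0}^{m_k}\beta_i^{(k)}\bigl(r(q(x_k))-r(x_{k-i})\bigr)=r\Bigl(q(x_k)+\sum_{i=0}^{m_k}\beta_i^{(k)}(q(x_k)-x_{k-i})\Bigr).
\]
The coefficients multiplying $Pb$ cancel, because the combination coefficients sum to one. The argument on the right is exactly $x_{k+1}$, so the objective equals $r_{k+1}$. This gives the left expression in \eqref{eq:NGr-LSP-P}.

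\textbf{Step 2: rewrite $r_{k+1}$ in terms of $E_k$.} Since $q(x)=Bx+Pb$, we get
\[
r(q(x_k))=PA(Bx_k+Pb)-Pb=B\,P(Ax_k-b)=Br_k,
\]
using $PAB=BPA$ and $PAPb-Pb=-BPb$. Therefore
\[
r_{k+1}=Br_k+\sum_{i=0}^{m_k}\beta_i^{(k)}(Br_k-r_{k-i})=Br_k-E_k\bm{\beta}^{(k)},
\]
with $E_k$ as in \eqref{eq:defEk}. This gives the right expression in \eqref{eq:NGr-LSP-P}.

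\textbf{Step 3: orthogonality relations.} The normal equations of the least-squares problem give $r_{k+1}^TE_k=0$. The first column of $E_k$ is $r_k-Br_k=PAr_k$, so $r_{k+1}^TPAr_k=0$. For each $s$, the column $r_{k-s}-Br_k$ equals $(r_{k-s}-r_k)+PAr_k$, so $r_{k+1}^T(r_{k-s}-r_k)=0$. Taking differences over $s=i$ and $s=j$ yields $r_{k+1}^T(r_{k-j}-r_{k-i})=0$.

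\textbf{Step 4: monotonicity.} Combining Steps 2 and 3,
\[
\|r_{k+1}\|^2=r_{k+1}^T(Br_k-E_k\bm{\beta}^{(k)})=r_{k+1}^TBr_k=r_{k+1}^Tr_k-r_{k+1}^TPAr_k=r_{k+1}^Tr_k.
\]
Cauchy--Schwarz then gives $\|r_{k+1}\|\le\|r_k\|$, which is \eqref{eq:NGr-decrease-P}.

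The only step needing care is the identity $r(q(x_k))=Br_k$ in Step 2, which depends on $PA$ commuting with $B=I-PA$. Everything else is routine bookkeeping of the combination coefficients, as in the earlier NGMRES proof.
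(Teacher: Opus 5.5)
Your proposal is correct and follows essentially the same route as the paper: identify the least-squares objective with $r_{k+1}$, rewrite it as $Br_k-E_k\bm{\beta}^{(k)}$, use the normal equations $r_{k+1}^TE_k=0$ to get both orthogonality relations, and conclude from $\|r_{k+1}\|^2=r_{k+1}^TBr_k=r_{k+1}^Tr_k$. You also verify explicitly that $r(q(x_k))=Br_k$, using the fact that $PA$ commutes with $B$; the paper's proof relies on this identity without comment.
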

\begin{proof}
In Algorithm \ref{alg:NGMRESr}, we have
\begin{align*}
    & r(q(x_k))+\sum_{i=0}^{m_k} \beta_i^{(k)} \left(r(q(x_k))-r(x_{k-i}) \right)\\
    =& P(A( (I-PA)x_k+Pb )-b )  +\sum_{i=0}^{m_k} \beta_i^{(k)} ( P(A( (I-PA)x_k+Pb )-b )-P(Ax_{k-i} -b ))\\
    =& P (Ax_{k+1}-b)= r(x_{k+1}).
\end{align*}
Then, we can rewrite 
\begin{align*}
     & r(q(x_k))+\sum_{i=0}^{m_k} \beta_i^{(k)} \left(r(q(x_k))-r(x_{k-i}) \right)\\
     =&(I-PA)r_k + \sum_{i=0}^{m_k} \beta_i^{(k)} \left((I-PA)r_k-r(x_{k-i}) \right)\\
     =& Br_k -E_k\bm{\beta}^{(k)}.
\end{align*}
Using the property of least-squares problem, we have $r_{k+1}^TE_k=0$. It follows that $r_{k+1}^T(r_{k-s}-Br_k)=0$. For $s=0$, we have $r_{k+1}^T(PAr_k)=0$. Moreover, $r_{k+1}^T(r_{k-i}-r_{k-j})=0$.

Next, we compute
\begin{equation*}
 r_{k+1}^Tr_{k+1}=r_{k+1}^T (Br_k-E_k\bm{\beta}^{(k)})=r_{k+1}^TBr_k=r_{k+1}(I-PA)r_k=r_{k+1}^Tr_k.    
\end{equation*}
It follows that $\|r_{k+1}\|\leq \|r_k\|$.
\end{proof}
Theorem \ref{thm:NGMRESr} reveals that NGMRESr($m$) minimizes preconditioned residual and the 2-norm of the preconditioned residuals is nonincreasing, which is different than AA.
\begin{theorem}
Consider solving $g(x)=Ax-b=0$. Assume that the full NGMRESr and the left-preconditioned GMRES use the same initial guess $x_0$. Let $\{x_i\}$ and $\{x_i^{Gl}\}$ be the sequences generated by the full NGMRESr and the left-preconditioned GMRES, respectively, and $r^{Gl}_i=P(Ax^{Gl}-b)$. Assume that for some $k^*\in \mathbb{Z}^+$, $r^{Gl}_{k^*-1}\neq 0$  and $\|r^{Gl}_k\|< \|r^{Gl}_{k-1}\|$  for all $k$ such that $0<k<k^*$. Then, the full NGMRESr  is equivalent to the left-preconditioned GMRES, i.e., $x_k=x_k^{Gl}$ for $0\leq k\leq k^*$.
\end{theorem}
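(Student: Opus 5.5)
The plan is to mirror the proof of Theorem \ref{thm:fullNG=G}, with the right-preconditioned quantities replaced by left-preconditioned ones. Left-preconditioned GMRES applied to $PAx=Pb$ produces $x_k^{Gl}=x_0+z_k$ with $z_k\in\mathcal{K}_k(PA,r_0)$, where $r_0=P(Ax_0-b)$. The iterate $z_k$ is chosen to minimize $\|r_0+PAz_k\|$, so the natural Krylov space here is $\mathcal{K}_k(PA,r_0)$ with no factor $P$ in the update. For the full NGMRESr iterates $\{x_j\}$ I define $V_k=\{y_j\}_{j=1}^k$ with
\begin{equation*}
y_j=x_j-x_0 \quad (j=1,\dots,k-1),\qquad y_k=x_{k-1}-x_0-r_{k-1},
\end{equation*}
where $r_{k-1}=P(Ax_{k-1}-b)$. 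Note that $q(x_{k-1})=x_{k-1}-r_{k-1}$. As before, I split the argument into Claim (a): if $V_k$ is a basis of $\mathcal{K}_k(PA,r_0)$, then $x_k=x_k^{Gl}$; and Claim (b): $V_k$ is such a basis for $1\le k\le k^*$.

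For Claim (a), I write the full NGMRESr update exactly as in the proof of Theorem \ref{thm:fullNG=G}, dropping $P$ from the directions:
\begin{equation*}
x_k=q(x_{k-1})+\sum_{i=0}^{k-1}\beta_i^{(k-1)}\big(q(x_{k-1})-x_{k-1-i}\big)=x_0-\sum_{i=1}^k\gamma_i y_i,
\end{equation*}
with the same reindexing $\gamma_i=\beta^{(k-1)}_{k-1-i}$ for $i<k$ and $\gamma_k=-(1+\sum_i\beta_i^{(k-1)})$. This map $\bm\beta\mapsto\bm\gamma$ is a bijection onto $\mathbb{R}^k$. By Theorem \ref{thm:NGMRESr}, $\bm\beta^{(k-1)}$ minimizes $\|r_k\|=\|P(Ax_k-b)\|=\|r_0-PA\sum\gamma_i y_i\|$. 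So $x_k-x_0$ minimizes $\|r_0+PAv\|$ over $v\in\mathrm{span}(V_k)=\mathcal{K}_k(PA,r_0)$, which is precisely the left-preconditioned GMRES characterization. Since $PA$ is invertible the minimizer is unique, and hence $x_k=x_k^{Gl}$.

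For Claim (b), I use induction on $k$. The base case is $V_1=\{-r_0\}$. For the step, assume $V_k$ is a basis with $k<k^*$. Claim (a) gives $x_k=x_k^{Gl}=x_0+z$ with $z\in\mathcal{K}_k(PA,r_0)$. Then $r_k=r_0+PAz\in\mathcal{K}_{k+1}(PA,r_0)$, so every element of $V_{k+1}$ lies in $\mathcal{K}_{k+1}$. Linear independence comes from a triangular (nested-degree) structure. The first $k-1$ vectors are unchanged and span $\mathcal{K}_{k-1}$. It then suffices to show two things. First, $r_k\notin\mathcal{K}_k(PA,r_0)$: otherwise $z\in\mathcal{K}_{k-1}$, since $PA$ is invertible and multiplication by $PA$ raises the Krylov degree, and then $x_0+z$ would be admissible at step $k-1$. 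That would give $\|r^{Gl}_{k-1}\|\le\|r_k^{Gl}\|$, contradicting the strict decrease assumption. Second, $z=x_k-x_0\notin\mathcal{K}_{k-1}$: otherwise $r_k=r_0+PAz\in\mathcal{K}_k$, contradicting the first point. Hence $y_k=z$ has exact degree $k$ and $y_{k+1}=z-r_k$ has exact degree $k+1$, so $V_{k+1}$ is a basis of $\mathcal{K}_{k+1}(PA,r_0)$.

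The main obstacle is making the first point rigorous. The step ``$r_k\in\mathcal{K}_k$ implies $z\in\mathcal{K}_{k-1}$'' uses the fact that $\mathcal{K}_k(PA,r_0)$ has full dimension $k$ for $k\le k^*$. This holds because the strict residual decrease and $r^{Gl}_{k^*-1}\neq0$ rule out an early invariant subspace. With that, a component of $z$ along $(PA)^{k-1}r_0$ would produce a degree-$k$ component in $PAz$ that $r_0$ cannot cancel. I would state this dimension fact explicitly before the induction, and otherwise reuse the argument of Theorem \ref{thm:fullNG=G} verbatim with $AP,\bar r$ replaced by $PA,r$.
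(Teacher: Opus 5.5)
Your proof is correct, but it takes a different route from the paper. The paper argues by reduction. NGMRESr with $q(x)=x+P(b-Ax)$ for $Ax=b$ is, line by line, the original NGMRES applied to $PAx=Pb$ with $g(x)=PAx-Pb$ and the unpreconditioned Richardson map, since $g(q(x_k))=r(q(x_k))$ and $g(x_{k-i})=r(x_{k-i})$. The full-NGMRES/GMRES equivalence of \cite{GreifHe25NGMRES} therefore applies with $A$ replaced by $PA$, and GMRES on $PAx=Pb$ is exactly left-preconditioned GMRES. One could equally invoke Theorem \ref{thm:fullNG=G} with $A\to PA$, $b\to Pb$, $P\to I$.

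You instead redo the Krylov-basis induction directly in $\mathcal{K}_k(PA,r_0)$. The reduction is shorter and makes the identity ``NGMRESr $=$ NGMRES on the left-preconditioned system'' explicit. That identity transfers other results for free; the paper reuses it immediately for the windowed theorem. Your argument is self-contained and more careful than the paper's proof of Theorem \ref{thm:fullNG=G}. The step ``$r_k\in\mathcal{K}_k\Rightarrow z\in\mathcal{K}_{k-1}$'' genuinely needs $\dim\mathcal{K}_{k+1}(PA,r_0)=k+1$, which the paper leaves implicit. Your justification is right: a $PA$-invariant $\mathcal{K}_j$ with $j\le k^*-1$ would force $r^{Gl}_j=0$, contradicting either $r^{Gl}_{k^*-1}\neq0$ or strict decrease at step $j+1$.

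When you write it up, make two small points explicit. First, $r_0\neq0$, which the base case needs and which follows from the hypotheses. Second, the first $k-1$ vectors of $V_{k+1}$ lie in $\mathcal{K}_{k-1}$ only because $x_j=x_j^{Gl}$ for all $j\le k-1$, so the induction hypothesis should cover all earlier levels.
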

Before providing the proof, we review the left-preconditioned GMRES. Recall that the $k$th approximation $x_k^{Gl}$ of the left-preconditioned GMRES can be expressed as (see \cite[Chapter 9]{saad2003iterative}) 
\begin{equation}\label{eq:LG-xk}
    x_k^{Gl}=x_0+s_{k-1}(PA)r_0,
\end{equation}
where $r_0=P(Ax_0-b)$ and $s_{k-1}$ is a polynomial of degree $k-1$, which minimizes the $k$th preconditioned residual norm, i.e.,
\begin{equation}\label{eq:left-pre-GMRES-mini}
    \min_{s_{k-1} \in S_{k-1}}\|r^{Gl}_k\|=\min_{s_{k-1} \in S_{k-1}}\|r_0+PAs_{k-1}(PA)r_0\|,
\end{equation}
where $S_{k-1}$ denotes the set of polynomials of degree at most $k-1$.
The corresponding left-preconditioned Krylov subspace is 
\begin{equation*}
    \mathcal{K}_k(PA, r_0)={\rm span} \{r_0, PAr_0, \cdots, (PA)^{k-1}r_0\}.
\end{equation*}
Then, \eqref{eq:LG-xk} can be rewritten as
\begin{equation}\label{eq:LG-xk-update}
    x_k^{Gl}=x_0+ z_k,
\end{equation}
where $z_k\in \mathcal{K}_k(PA, r_0)$, and \eqref{eq:left-pre-GMRES-mini} is given by 
\begin{equation}\label{eq:left-GMRES-mini}
    \min_{z_k\in  \mathcal{K}_k(PA, r_0)}\|r^{Gl}_k\|= \min_{z_k\in  \mathcal{K}_k(PA, r_0)}\|r_0+PAz_k\|.
\end{equation}

\begin{proof}
In \cite{GreifHe25NGMRES}, we have shown that applying Algorithm \ref{alg:NGMRES} to solve $PAx=Pb$, i.e., $g(x)=r(x)=PAx-Pb$, with infinite window size and $q(x)=x+(PAx-Pb)$ yields an iteration process equivalent to that of GMRES applied to $PAx=Pb$. However, GMRES applied to $PAx=Pb$ is the same as the left-preconditioned GMRES with preconditioner $P$ applied to solve $Ax=b$.   Note that  Algorithm \ref{alg:NGMRESr} applying to $q(x)=x+P(Ax-b)$ to solve $Ax=b$ is the same as Algorithm  \ref{alg:NGMRES} applying to $q(x)=x+(PAx-Pb)$ to solve $PAx=Pb$, i.e, $g(x)=PAx-Pb$. Thus, the iterates generated by Algorithm \ref{alg:NGMRESr} is identical to those of the left-preconditioned GMRES. 
\end{proof}

We validate the above theorem by considering Example \ref{ex:Laplace} using $N=16$. In the left-hand graph of Figure \ref{fig:LGM-NGr-NG-AA}, we see that the residual norm of left-preconditioned GMRES matches that of full NGMRESr, which confirms our theorem. In contrast, the residual norms of full AA and full NGMRES are slightly larger than that of left-preconditioned GMRES (see the right‑hand graph). 
    
\begin{figure}[H]
		\centering
		\includegraphics[width=0.49\textwidth]{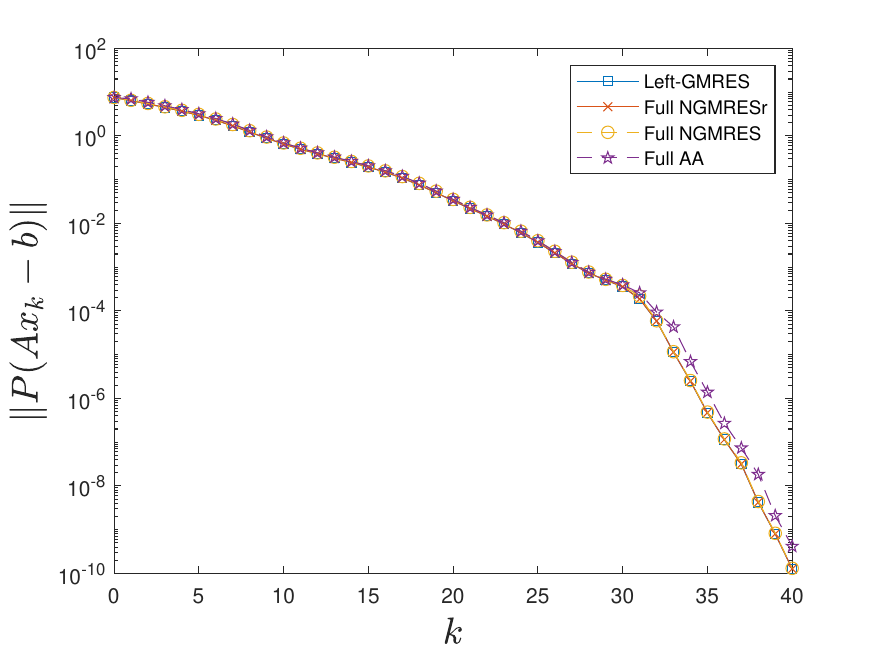}
        \includegraphics[width=0.49\textwidth]{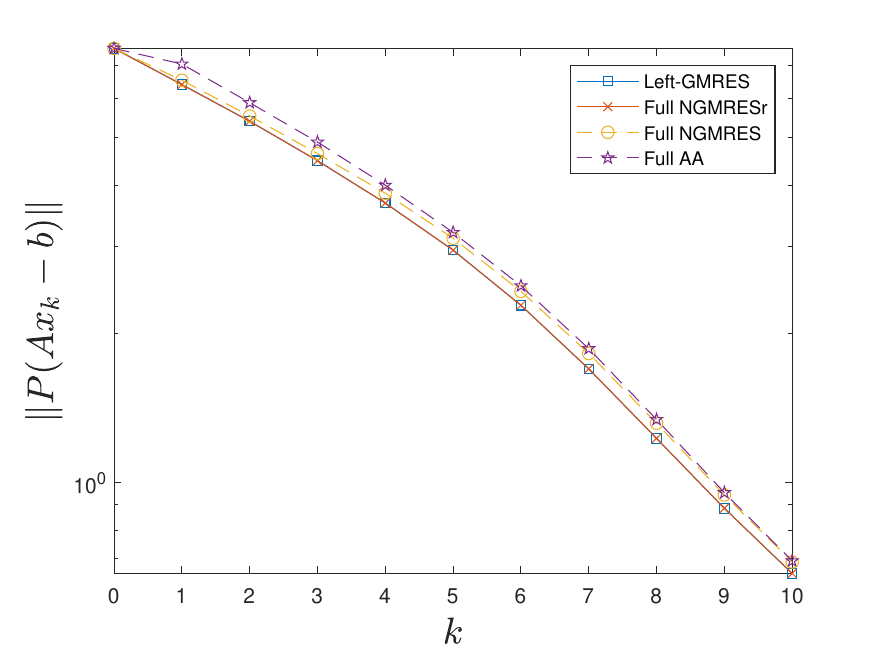}
		\caption{Example \ref{ex:Laplace} with $N=16$. Convergence history (preconditioned residual norms) for left-preconditioned GMRES, full NGRMESr, full NGMRES, and full AA. The graph on the right shows the first 10 of the 40 iterations.}\label{fig:LGM-NGr-NG-AA}
	\end{figure}

In Figure \ref{fig:All-methods}, we compare all methods discussed in the previous sections for Example \ref{ex:Laplace}. The residual norm of full NGMRESr shows the best performance overall. In the early iteration stage, full AAg performs the worst among all methods (see the right‑hand graph). Ultimately, all methods reach the same stopping criterion at around 40 iterations.
\begin{figure}[H]
		\centering
		\includegraphics[width=0.49\textwidth]{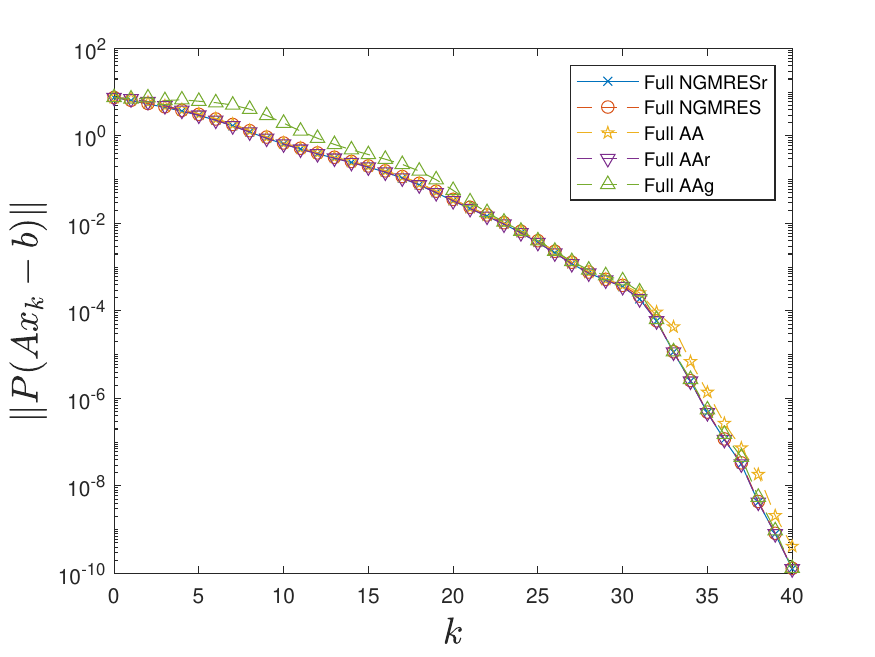}
        \includegraphics[width=0.49\textwidth]{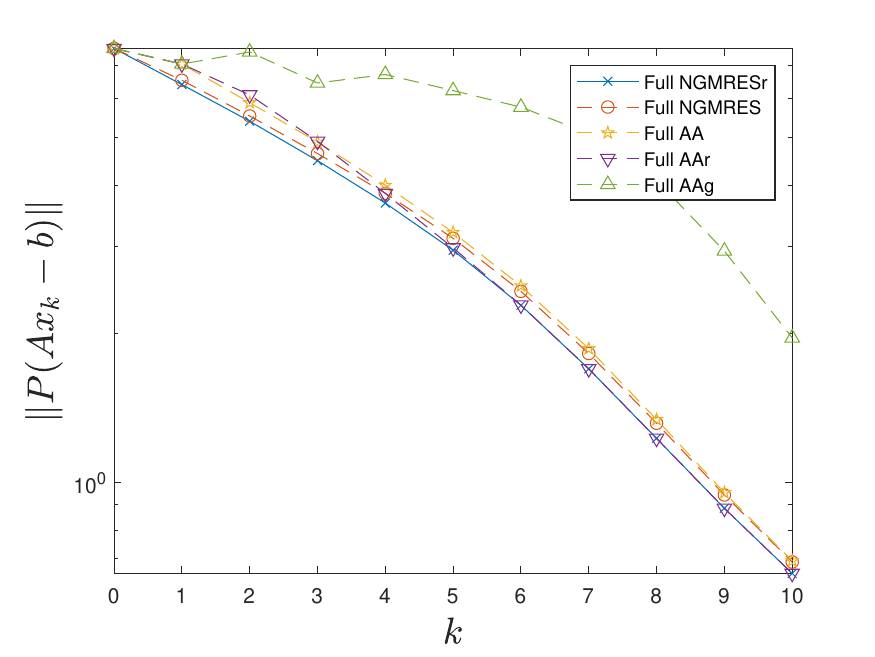}
		\caption{Example \ref{ex:Laplace} with $N=16$. Convergence history (preconditioned residual norms) for full NGRMESr, full NGMRES, full AA, full AAr and full AAg. The graph on the right shows the first 10 of the 40 iterations.}\label{fig:All-methods}
	\end{figure}

 \begin{theorem}
Consider NGMRESr($m$) presented in Algorithm \ref{alg:NGMRESr} to accelerate preconditioned Richardson iteration, \eqref{eq:PR-FP}, and also the left-preconditioned GMRES with left preconditioner $P$ to solve $g(x)=Ax-b=0$. Let $\{r^{Gl}_k\}$ be the sequence generated by the left-preconditioned GMRES. Assume that both methods use the same initial guess, and  for some $k^*\in \mathbb{Z}^+$, $r^{Gl}_{k^*-1}\neq 0$  and $\|r^{Gl}_k\|< \|r^{Gl}_{k-1}\|$  for all $k\in\mathbb{Z}^+$ such that $0<k<k^*$.  If $(PA)^T=PA$ or $PA=\tau I + S$, where $\tau \in \mathbb{R}$ and $S^T=-S$,  we have $x_k^{NG(m)}=x_k^{Gl}$ for $\forall m\in \mathbb{Z}^+$ and $0\leq k\leq k^*$.
\end{theorem}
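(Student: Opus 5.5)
The cleanest route is the reduction already used for the full NGMRESr result: NGMRESr($m$) applied to $q(x)=x+P(b-Ax)$ for $Ax=b$ produces exactly the same iterates as NGMRES($m$) (Algorithm \ref{alg:NGMRES}) applied to the transformed linear system $\tilde A x=\tilde b$ with $\tilde A=PA$, $\tilde b=Pb$, using the unpreconditioned Richardson iteration $q(x)=x+(\tilde b-\tilde Ax)$. To check this, note that both the update formula and the least-squares functional \eqref{eq:min-NGr} coincide with those of Algorithm \ref{alg:NGMRES} once $g$ is replaced by $\tilde g(x)=\tilde Ax-\tilde b=r(x)$. Likewise, left-preconditioned GMRES for $Ax=b$ is literally unpreconditioned GMRES for $\tilde A x=\tilde b$, with residuals $r^{Gl}_k=\tilde A x_k^{Gl}-\tilde b$. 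The statement therefore becomes the unpreconditioned windowed-NGMRES/GMRES equivalence of \cite[Theorem 3.6]{GreifHe25NGMRES}, applied to $\tilde A$. Alternatively, it is the special case $P=I$ of Theorem \ref{thm:NGMRESm=NGMRES1-right} with $A$ replaced by $\tilde A$. The hypotheses $(PA)^T=PA$ or $PA=\tau I+S$ are exactly the symmetric or shifted skew-symmetric hypotheses on $\tilde A$ required there, and the strict-decrease assumption on $\|r^{Gl}_k\|$ is the GMRES strict-decrease assumption for $\tilde A$.

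For a self-contained argument instead of a citation, I would proceed by induction on $k$, following the sketch of Theorem \ref{thm:NGMRESm=NGMRES1-right} with $AP$ replaced by $PA$ and $H$ replaced by $B=I-PA$.

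\begin{enumerate}
\item Establish the left-preconditioned analogue of Lemma \ref{lem:rk-rightGMRES-orth}: $r_k^{Gl}\perp PA\,\mathcal{K}_k(PA,r_0)$, $(r^{Gl}_{k+1})^TPA\,r^{Gl}_k=0$, and $(r^{Gl}_{k+1})^T(r^{Gl}_j-r^{Gl}_i)=0$. These follow from the Galerkin characterization \eqref{eq:left-GMRES-mini}.
\item Use the symmetry or skew structure of $PA$ to show a short-recurrence property: the minimizer over $\mathcal{K}_{k+1}(PA,r_0)$ lies in the affine span $x_k^{Gl}+\mathrm{span}\{r^{Gl}_k, x_k^{Gl}-x_{k-1}^{Gl}\}$. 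Equivalently, $r^{Gl}_{k+1}\in \mathrm{span}\{Br^{Gl}_k, r^{Gl}_k, r^{Gl}_{k-1}\}$, which are exactly the vectors available to NGMRESr(1) through $E_k$ in \eqref{eq:defEk}.
\item Conclude the induction. By Theorem \ref{thm:NGMRESr}, NGMRESr($m$) minimizes $\|r_{k+1}\|$ over an affine set that contains the NGMRESr(1) set, since $m\ge1$ includes the columns for $r_k$ and $r_{k-1}$. It is also contained in $x_0+\mathcal{K}_{k+1}(PA,r_0)$, using the inductive hypothesis that earlier iterates agree with GMRES. The minimum over the larger Krylov set is attained inside the smaller set, so the two minimizers coincide. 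Uniqueness of the minimizer follows from linear independence of the columns, which is where strict decrease and $r^{Gl}_{k^*-1}\neq0$ enter.
\end{enumerate}

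The main obstacle is step 2, the short-recurrence claim. For symmetric $PA$ it comes from the Lanczos three-term structure: $PA\,r_k$ is orthogonal to $\mathcal{K}_{k-1}$-type directions, because $(PA r_k)^T PA\,v=r_k^T(PA)^2v$, which vanishes for $v\in\mathcal{K}_{k-1}$ by step 1. For $PA=\tau I+S$ the analogous identity uses $S^T=-S$. I would import this step verbatim from \cite[Lemma 3.7]{GreifHe25NGMRES} with $A\to PA$, which is the intended reduction.
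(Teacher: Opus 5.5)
Your first paragraph is the paper's proof. NGMRESr($m$) on $q(x)=x+P(b-Ax)$ is literally NGMRES($m$) on $PAx=Pb$ with $g$ replaced by $r(x)=P(Ax-b)$, left-preconditioned GMRES is plain GMRES on $PAx=Pb$, and \cite[Theorem 3.6]{GreifHe25NGMRES} with $A$ replaced by $PA$ finishes it. In your optional self-contained sketch, $x_{k+1}$ is unique simply because $PA$ is nonsingular; strict decrease is what step 2 needs, to guarantee $\mathrm{span}\{PA\,r_k,\, r_k-r_{k-1}\}+PA\,\mathcal{K}_{k-1}(PA,r_0)=PA\,\mathcal{K}_{k+1}(PA,r_0)$.
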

\begin{proof}
As shown earlier, Algorithm \ref{alg:NGMRESr} applied to $q(x)=x+P(Ax-b)$ for solving $Ax=b$  is equivalent to Algorithm  \ref{alg:NGMRES} applied to $q(x)=x+(PAx-Pb)$ for solving $PAx=Pb$, i.e, $g(x)=PAx-Pb$. We then treat $PA$ as $A$ in \cite[Theorem 3.6]{GreifHe25NGMRES}, which gives the desired results.
\end{proof}
We summarize the five methods in Table \ref{tab:AA-NGMRES-LSP}.  We have analyzed the connections between AA, NGMRES, and NGMRESr and preconditioned GMRES. However, no corresponding relationship between AAg or AAr and GMRES has been established.

\begin{table}[H]
\footnotesize
\centering
\caption{Full AA and full NGMRES based methods applied to accelerate $q(x_k)=x_k+P(b-Ax_k)$ to solve $Ax=b$. Recall $\bar{r}_{k+1}=Ax_{k+1}-b$, $r_{k+1}=P(Ax_{k+1}-b)$, and $B=I-PA$. Let $H=I-AP$, $x_k^{Gl}$ and $x_k^{Gr}$ are the iterates obtained from left- and right-preconditioned  GMRES, respectively. The notation `-' indicates that the relationship is unknown and $\downarrow$ stands for nonincreasing.}\label{tab:AA-NGMRES-LSP}
\begin{tabular}{c|c|l|c}
\hline
Method   & LSP minimizes  & $\downarrow$   & relation with GMRES \\
\hline
AA       & $\|B^{-1}r_{k+1}\|$  &-  & $x_{k+1}=q(x_k^{Gl})$ \\
\hline
AAg       &  $\|\bar{r}_{k+1}\|$  &$\downarrow$ provided $\|H\|<1$    & - \\
\hline
AAr    & $\|r_{k+1}\|$  & $\downarrow$ provided $\|B\|<1$ & - \\
\hline
NGMRES    & $\|\bar{r}_{k+1}\|$  & $\downarrow$   & $x_{k+1}=x_k^{Gr}$\\
\hline
NGMRESr    & $\|r_{k+1}\|$ & $\downarrow$  & $x_{k+1}=x_k^{Gl}$\\
\hline
\end{tabular}
\end{table}

\begin{remark}
In Algorithm \ref{alg:AAg}, the additional evaluation of function $g(\cdot)$ in \eqref{eq:min-AAg} is relative inexpensive compared to the evaluation of $q$. Thus, the computational time of one step in  Algorithm \ref{alg:AAg} is competitive with that of Algorithm \ref{alg:AA}.  In Algorithm \ref{alg:AAr}, we need to evaluate $r(q(\cdot))=q(\cdot)-q(q(\cdot))$ in \eqref{eq:min-AAr}. The dominantly additional cost is the evaluation of $q(q(\cdot))$, which might increase the total computational cost compared to the original AA presented in Algorithm \ref{alg:AA}. Similarly, in Algorithm \ref{alg:NGMRESr}, the dominantly additional cost is the evaluation of $q(q(\cdot))$ in \eqref{eq:min-NGr}, which might be expensive compared to the original NGMRES presented in Algorithm \ref{alg:NGMRES}. As a result, practitioners may need to weigh the trade-off between iteration count and CPU time and efficient implementation of these methods is required. In this work, we do not further expand the discussion on the CPU time.
\end{remark}

\section{Numerical Results}\label{sec:num}
In this section, we use two linear problems to illustrate the performance of AA, NGMRES and their variants based on the reformulated least-squares problems. 
 In practice, windowed variants of AA and NGMRES are commonly used. Accordingly, we consider windowed variants for two linear systems with symmetric and nonsymmetric coefficient matrices. 
 
\subsection{Symmetric case}\label{sec:symm} 
We compare the AA‑type and NGMRES‑type methods with $m=5$ and $m=15$ for Example \ref{ex:Laplace} with $N=64$.  We use $P=L^{-1}$ as the preconditioner, where $L$ denotes the lower triangular part of $A$.
 Figure \ref{fig:m5m15-all-methods} shows the convergence history of the preconditioned residual norms. For $m=5$, the AA‑type methods outperform the NGMRES‑type methods, with AAr(5) delivering the best performance among all methods considered. When $m=15$,  the windowed AA variants continue to outperform the windowed NGMRES‑type methods, and AAr(15) is slightly better than the others. Increasing the window size improves the performance of the NGMRES‑type methods, and NGMRESr and NGMRES behave similarly.

    \begin{figure}[H]
		\centering
        \includegraphics[width=0.3\textwidth]{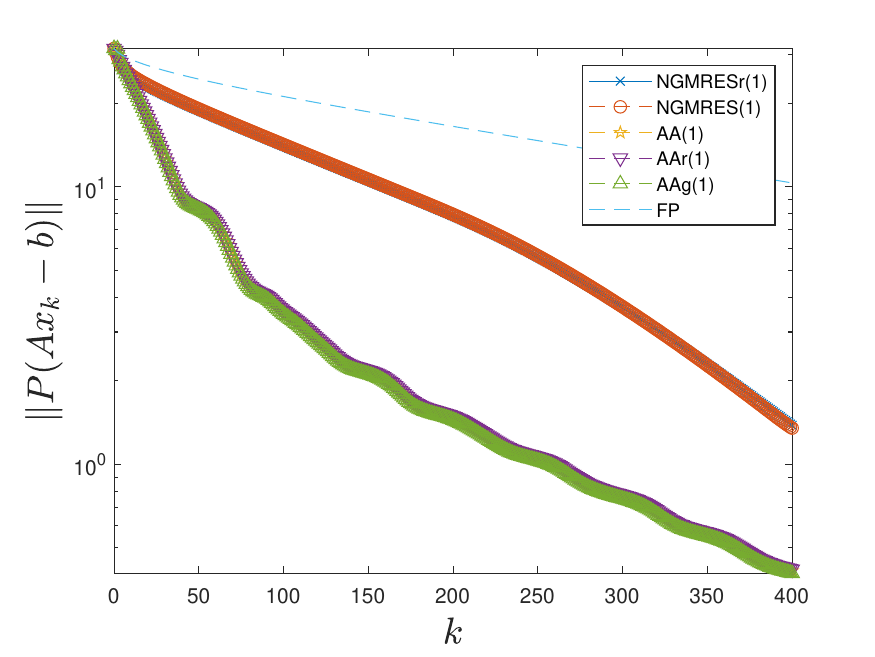}
		\includegraphics[width=0.3\textwidth]{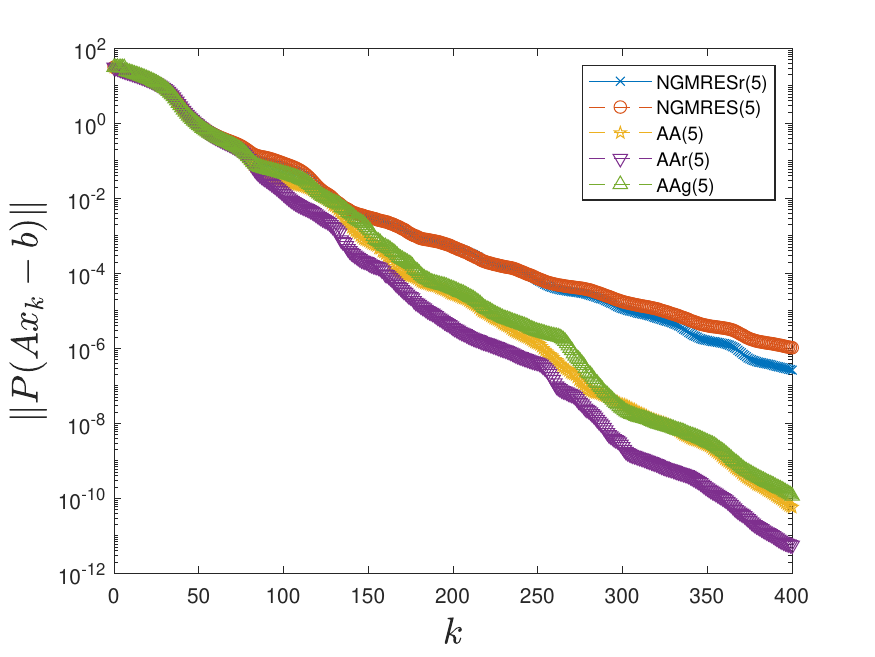}
        \includegraphics[width=0.3\textwidth]{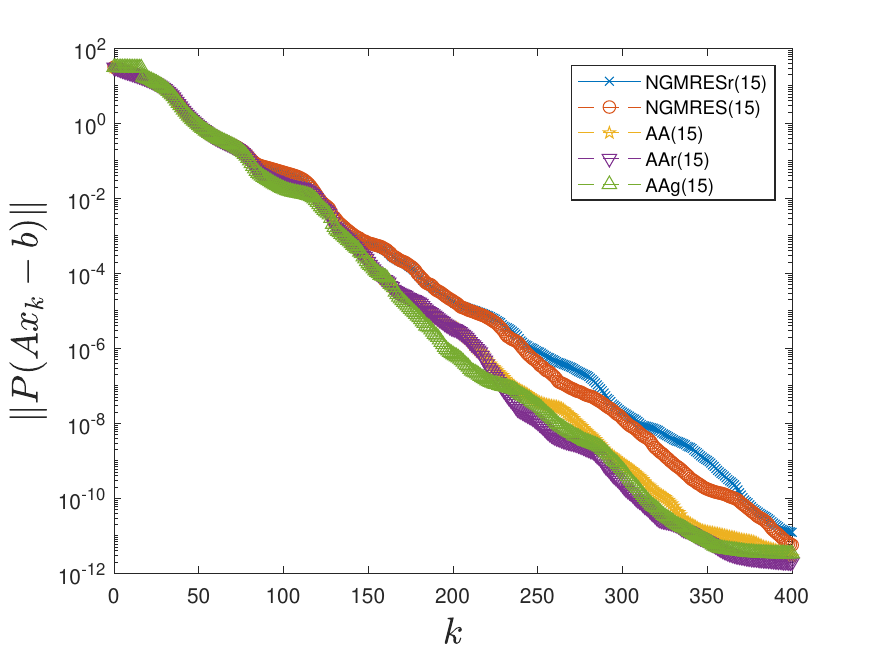}
		\caption{Example \ref{ex:Laplace} with $N=64$. Convergence history (preconditioned residual norms) for  windowed NGMRES-type and AA-type methods using  $m=1$ (left), $m=5$ (middle) and $m=15$ (right).}\label{fig:m5m15-all-methods}
	\end{figure}
\subsection{Nonsymmetric case} 
\begin{example}\label{ex:conv-diff}
   We consider a two-dimensional convection-diffusion model equation in a unit square, given by
   \begin{align*}
         -\Delta u+\sigma_1 u_x+\sigma_2 u_y =& f,    \quad \text{in}\quad \Omega=(0,1)\times (0,1),\\
           u =&0,  \quad  \text{on}\quad \partial \Omega.
     \end{align*}
\end{example}
We use a centered second-order five-point finite difference scheme for the Laplacian, and a centered second-order schemes for the first partial derivatives on an $N\times N$ grid with $h=1/(N+1)$,
scaling the matrix by $h^2$. We set $\frac{\sigma_1h}{2}=0.5$ and $\frac{\sigma_2h}{2}=0.5$. The right‑hand side $b$ is taken to be a vector of ones and the zero vector is used as the initial guess. The inverse of the preconditioner $P$ is the lower triangular part of matrix $A$. In figure \ref{fig:m-conv-diff}, we report the convergence history for fixed-point iteration, windowed NGMRES-type and AA-type methods using $m=1$ (left), $m=5$ (middle) and $m=15$. We see that AA-type and NGMRES-type methods greatly speed up the original fixed-point iteration. For $m=1,5$, windowed NGMRES and NGMRESr outperform AA-type methods especially for $m=1$, which is different than the symmetric case considered in section \ref{sec:symm}. When $m=15$, AA-type and NGMRES-type methods performs similarly. We notice that NGMRES(1) and NGMRESr(1) are much better than NGMRES(15) and NGMRESr(15). 

    \begin{figure}[H]
		\centering
		\includegraphics[width=0.3\textwidth]{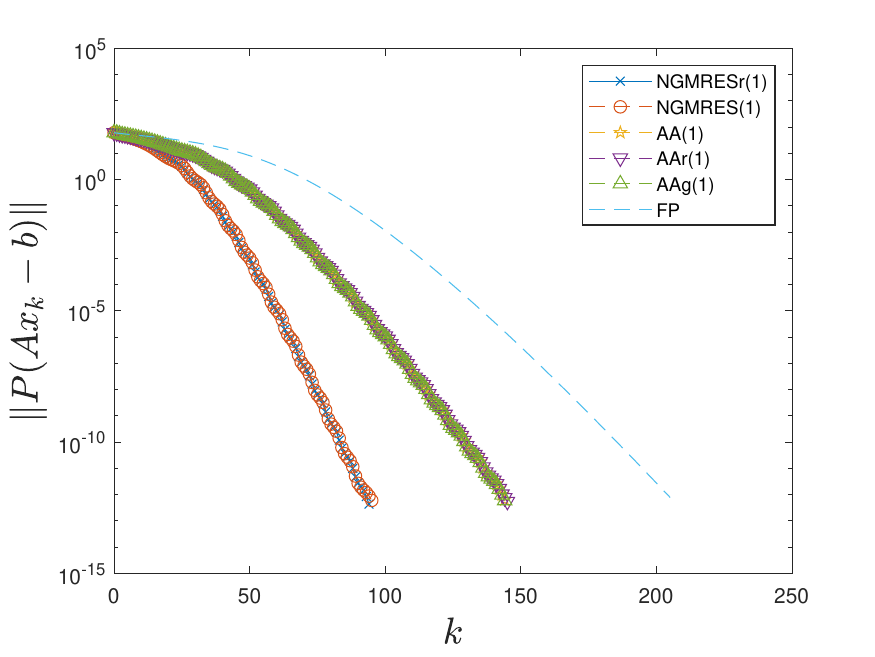}
        \includegraphics[width=0.3\textwidth]{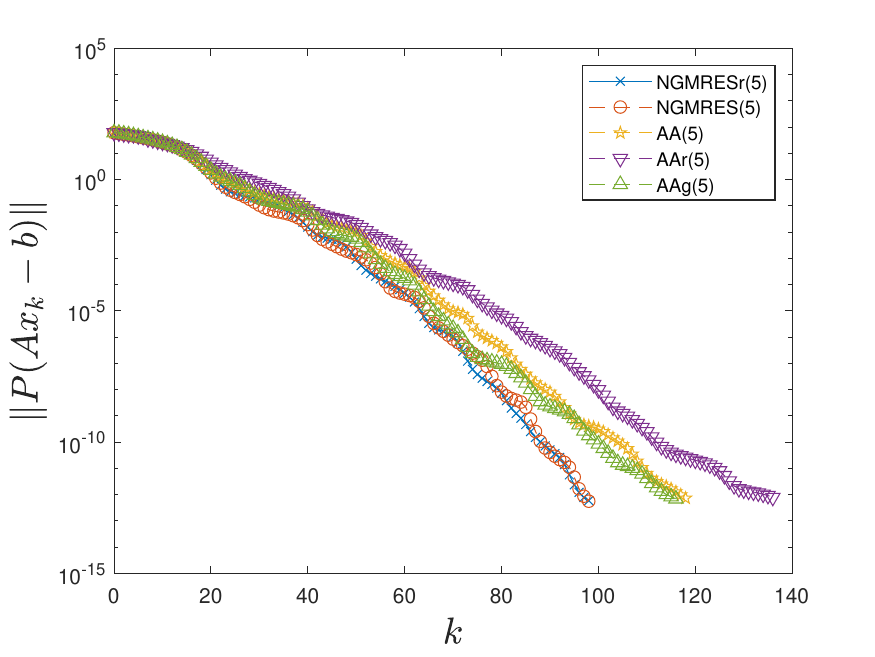}
        \includegraphics[width=0.3\textwidth]{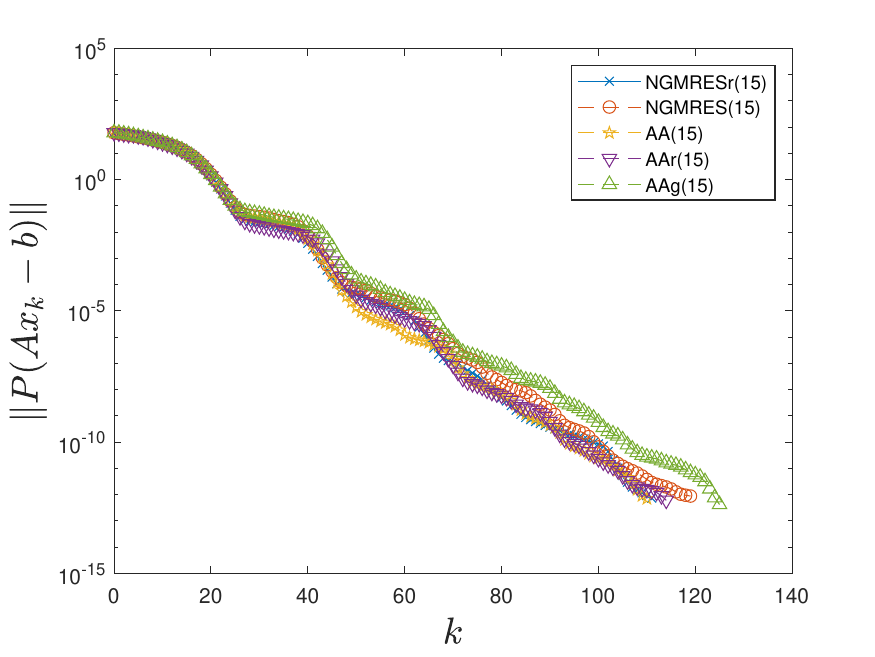}
		\caption{Example \ref{ex:conv-diff} with $N=64$. Convergence history (preconditioned residual norms) for windowed NGMRES-type and AA-type methods using $m=1$ (left), $m=5$ (middle) and $m=15$ (right).}\label{fig:m-conv-diff}
	\end{figure}
As mentioned earlier, the focus of this work is to understand the properties of NGMRES applied to the preconditioned Richardson iteration and its connection to preconditioned GMRES, as well as to provide guidance for practical users. However, the methods discussed here are general and can be applied to a broad class of nonlinear problems. We leave such investigations for future work, particularly those involving new variants of AA and NGMRES.
 \section{Conclusion}\label{sec:con}

In the work, We derive orthogonal properties of AA and NGMRES when applied to the preconditioned Richardson iteration. We further propose new variants of NGMRES and AA  using alternative least-squares formulations. The new least-squares formulation of NGMRES minimizes the preconditioned residual, and these of AA  when applied to the preconditioned Richardson iteration minimize the residual and the preconditioned residual, respectively. We establish connections of these variants with full version and windowed version  with preconditioned GMRES. We show that applying full NGMRES to the preconditioned Richardson iteration yields an algorithm equivalent to right-preconditioned GMRES, whereas the newly developed full NGMRES formulation corresponds to the left-preconditioned GMRES. Under certain conditions on the preconditioned coefficient matrix, we show the equivalence between preconditioned GMRES and windowed NGMRES.
Our theoretical findings deepen our understanding of AA‑type and NGMRES‑type methods for linear systems and offer guidance for their practical use. To evaluate the performance of the new variants, we present numerical experiments on linear problems.

More studies are needed to carefully assess these two types of methods in terms of implementation, CPU time, and performance on more general problems, especially for nonlinear problems, which we leave for future work. In addition, we plan to pursue the convergence analysis of the proposed variants for nonlinear problems and to extend their applications to highly challenging nonlinear systems in combination with other techniques, such as preconditioning.

\bibliographystyle{plain}  
\bibliography{AANGLSPbib}

\end{document}